\theoremstyle{plain}
\newtheorem{theorem}[subsection]{{\bf Theorem}}
\newtheorem*{theorem*}{{\bf Theorem}}
\newtheorem{corollary}[subsection]{{\bf Corollary}}
\newtheorem*{corollary*}{{\bf Corollary}}
\newtheorem{lemma}[subsection]{{\bf Lemma}}
\theoremstyle{definition}
\theoremstyle{remark}
\newtheorem{example}[subsection]{{\it Example}}
\numberwithin{equation}{section}
\DeclareMathOperator{\HH}{H}
\DeclareMathOperator{\B}{B}
\DeclareMathOperator{\M}{M}
\DeclareMathOperator{\dd}{d}
\DeclareMathOperator{\rank}{rank}
\DeclareMathOperator{\K}{K}
\DeclareMathOperator{\CC}{\mathbb C}
\begin{document}
\baselineskip=15pt
\title{Bogomolov multipliers of $p$-groups of maximal class}
\author[G. A. Fern\'{a}ndez-Alcober]{Gustavo A. Fern\'{a}ndez-Alcober}
\address[Gustavo A. Fern\'{a}ndez-Alcober]{
Department of Mathematics \\
University of the Basque Country UPV/EHU \\
48080 Bilbao \\
Spain}
\thanks{}
\email{gustavo.fernandez@ehu.eus}
\author[U. Jezernik]{Urban Jezernik}
\address[Urban Jezernik]{
Department of Mathematics \\
University of the Basque Country UPV/EHU \\
48080 Bilbao \\
Spain}
\thanks{This project was supported by the Spanish Government,
grant MTM2017-86802-P, partly with FEDER funds,
and by the Basque Government, grant IT974-16. 
This project has also received funding from the European Union’s Horizon 2020 research and innovation programme under the Marie Sklodowska-Curie grant agreement No 748129.}
\email{urban.jezernik@ehu.eus}
\keywords{Bogomolov multiplier, $p$-group of maximal class}
\date{\today}
\begin{abstract}
\noindent
Let $G$ be a $p$-group of maximal class and order $p^n$. We determine whether or
not the Bogomolov multiplier $\B_0(G)$ is trivial in terms of the lower central
series of $G$ and $P_1 = C_G(\gamma_2(G) / \gamma_4(G))$.  If in addition $G$ has positive degree of commutativity and
$P_1$ is metabelian, we show how understanding $\B_0(G)$ reduces to the
simpler commutator structure of $P_1$. This result covers all $p$-groups of
maximal class of large enough order and, furthermore, it allows us to give the
first natural family of $p$-groups containing an abundance of groups with
nontrivial Bogomolov multipliers.
We also provide more general results on Bogomolov multipliers of
$p$-groups of arbitrary coclass $r$.
\end{abstract}
\maketitle
\section{Introduction}
\label{s:intro}

\noindent
The Bogomolov multiplier is a group-theoretical invariant, originally introduced 
as an obstruction to the famous {\em rationality problem} in
algebraic geometry. This problem asks whether a given field extension $E/k$ is
rational (purely transcendental). Of particular interest is the situation when a
finite group $G$ acts on the function field $L$ of an
affine space by permuting indeterminates. The subfield $L^G$ of fixed points of
this action represents the function field of the quotient variety. 
Noether \cite{Noe13} asked whether the extension $L^G/k$ is always (stably) purely
transcendental. This version of the rationality problem is known as {\em
Noether's problem}. 
Saltman \cite{Sal84} found explicit examples of groups for which the answer to
Noether's problem is in general  negative, even when taking $k = \CC$. His
approach was to use a
certain Galois-cohomological invariant associated to the group $G$, namely, the
{\em unramified Brauer group}. It was Bogomolov \cite{Bog87} who showed
that this invariant can be computed purely in terms of group-cohomology
as the intersection of kernels of all restriction maps from the
Schur multiplier of $G$ to Schur multipliers of its abelian subgroups:
\[
\bigcap_{A \leq G, [A,A] = 1} \ker \left( \HH^2(G, \mathbb{C}^*) \longrightarrow \HH^2(A, \mathbb{C}^*) \right).
\]
This cohomological invariant is now known as the
{\em Bogomolov multiplier} and denoted by $\B_0(G)$.
It represents an obstruction to Noether's problem, which has a negative
answer for $G$ provided that $\B_0(G)$ is nontrivial.

Explicitly determining the Bogomolov multiplier of a group $G$ can be quite involved.
By general homological arguments, it boils down to studying 
Bogomolov multipliers of the Sylow subgroups of $G$. 
For $p$ a prime, the smallest examples of $p$-groups with nontrivial Bogomolov multiplier
are of order $64$ if $p=2$ \cite{Chu08,Chu10}, and 
of order $p^5$ if $p$ is odd \cite{Bog87,Mor12p5}. These groups are all of maximum nilpotency class
given their order.

Motivated by these examples, in this paper we set to inspect Bogomolov multipliers of
$p$-groups $G$ of {\em maximal class}, that is, of order $p^n$ and nilpotency class $n-1$.
Our study of Bogomolov multipliers of these groups partially builds on and continues previous works on Schur multipliers 
of $p$-groups of maximal class and related groups \cite{DEF08,Eic08,EF11,Mor11}.
The theory of $p$-groups of maximal class was started by Blackburn in
\cite{Bla58} and further developed by Leedham-Green and McKay \cite{Lee76,Lee78a,Lee78b}, by
Shepherd \cite{She70}, and by the first author \cite{Fer95}.
General accounts of this theory can be found in \cite{Fer01}, \cite{Hup67} or \cite{Lee02}.
We can assume $n \geq 4$, and we will tacitly do so in what follows.
Consider the chief series
$G>P_1>\cdots>P_n=1$, with $P_i=\gamma_i(G)$ for $i\geq 2$, and $P_1=C_G(P_2/P_4)$.
Then $P_1$ is a maximal subgroup of $G$.
As a main result, we prove that most $p$-groups of maximal class have non-trivial Bogomolov multipliers.

\begin{theorem}
\label{th:UCR:MaxClassNontriviality}
Let $G$ be a $p$-group of maximal class and order $p^n$.
Then $\B_0(G)$ is trivial if and only if $[P_1, P_1] = [P_1, P_{n-2}]$.
\end{theorem}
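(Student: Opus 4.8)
The plan is to work throughout with the characterization $\B_0(G)\cong\ker(\kappa)$ furnished in the excerpt, where $\kappa\colon G\curlywedge G\to [G,G]=P_2$ is the commutator map $x\curlywedge y\mapsto [x,y]$. Since $G$ is finite, the exact sequence $1\to\B_0(G)\to G\curlywedge G\to P_2\to 1$ reduces the whole statement to deciding when $|G\curlywedge G|=|P_2|=p^{n-2}$. First I would fix the standard generators of a group of maximal class: an element $s\in G\setminus P_1$ and an element $s_1\in P_1\setminus P_2$, with $s_i=[s_{i-1},s]$, so that $P_i=\langle s_i\rangle P_{i+1}$ for all $i\ge 1$. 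Because $G=\langle s,s_1\rangle$ is $2$-generated and the diagonal wedges $s\curlywedge s$ and $s_1\curlywedge s_1$ vanish, one checks from the defining relations of the curly exterior square, together with the induced conjugation action $(x\curlywedge y)^g=x^g\curlywedge y^g$, that $G\curlywedge G$ is generated by the $G$-conjugates of the single element $s\curlywedge s_1$, and that $\kappa$ is equivariant for conjugation on $P_2$. Thus $\B_0(G)$ is the normal subgroup recording the relations among these conjugates that are not forced by the module structure of $P_2$.

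Next I would separate the contribution of the outer wedges $s\curlywedge s_i$ from that of the wedges internal to $P_1$. Expanding a general conjugate $(s\curlywedge s_1)^g$ by multiplicativity and collecting, every element of $G\curlywedge G$ becomes a product of wedges $s\curlywedge s_i$ and wedges $s_i\curlywedge s_j$ with $i,j\ge 1$; the former map under $\kappa$ to the elements $[s,s_i]$, which span $P_2$ in a shifted, relation-free fashion, while the latter land in $[P_1,P_1]$. The decisive ingredient is the commuting relation $a\curlywedge b=1$ whenever $[a,b]=1$: inside $G$ there are commuting pairs that are unavailable inside $P_1$ alone, and I would show that the extra identifications these force among the internal wedges are exactly those arising from pairs $(x,z)$ with $x\in P_1$ and $z\in P_{n-2}$, that is, they account precisely for $[P_1,P_{n-2}]\le P_{n-1}$. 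Carrying this out should produce a natural epimorphism $\B_0(G)\to [P_1,P_1]/[P_1,P_{n-2}]$ and, under control of the outer wedges, an isomorphism.

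Granting this analysis, both implications follow. If $[P_1,P_1]=[P_1,P_{n-2}]$, then every commutator produced by an internal wedge is already realized by a universal (commuting) relation, no relation among the $(s\curlywedge s_1)^g$ survives beyond those forced by $P_2$, so $|G\curlywedge G|=p^{n-2}$ and $\B_0(G)=1$. Conversely, choosing $x,y\in P_1$ whose commutator represents a nonzero class in $[P_1,P_1]/[P_1,P_{n-2}]$, I would build a nontrivial element of $\ker\kappa$ from the wedge $x\curlywedge y$ corrected by outer wedges, the obstruction to its triviality being precisely that $[x,y]$ cannot be rewritten through a commutator with an element of $P_{n-2}$; this forces $\B_0(G)\ne 1$.

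The main obstacle is the middle step: controlling the commutator calculus well enough to prove that the outer wedges $s\curlywedge s_i$ contribute no kernel, and that the only internal identifications are the ones recorded by $[P_1,P_{n-2}]$. This needs the collection and Jacobi identities special to maximal class, careful bookkeeping modulo $P_{n-1}$, and—most delicately—the case where $P_1$ is not abelian, in which the internal wedges $s_i\curlywedge s_j$ interact both with one another and with the $s$-action. One must also treat uniformly the degenerate low-order configurations in which $[P_1,P_1]$ already lies in $P_{n-1}$.
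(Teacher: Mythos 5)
Your reduction to counting $|G\curlywedge G|$ against $|P_2|=p^{n-2}$, and your sketch of the direction $[P_1,P_1]=[P_1,P_{n-2}]\Rightarrow \B_0(G)=1$, are close in spirit to what the paper actually does: there, too, one shows that when $P_1$ is nonabelian the hypothesis forces $[P_1,P_1]=P_{n-1}$, that every internal wedge $s_i\curlywedge s_j$ is then a power of a single wedge which is itself expressible through the outer wedges $s_k\curlywedge s$, and a normal-form count kills $\B_0(G)$. Even for this half you are silently using two nontrivial inputs: triviality of $\B_0$ for abelian-by-cyclic groups (to dispose of the case $P_1$ abelian) and triviality of $\B_0(P_1)$ (to compare two internal wedges with one another); note also that the outer wedges are \emph{not} relation-free --- the relations coming from expanding $s_i\curlywedge s^p=1$ are exactly what makes the final count work.

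The pivot of your proposal, however --- that the only identifications among internal wedges beyond those already present in $P_1\curlywedge P_1$ come from commuting pairs with one entry in $P_{n-2}$, yielding an epimorphism and ultimately an isomorphism $\B_0(G)\to [P_1,P_1]/[P_1,P_{n-2}]$ --- is false, and no argument along these lines can be completed. You are omitting the identifications forced by conjugation: in the Hopf-type description $\B_0(G)\cong (F'\cap R)/\langle \K(F)\cap R\rangle$ one has $[F,R]\le\langle \K(F)\cap R\rangle$, so the internal wedges contribute only the $G$-\emph{coinvariants} $(P_1\curlywedge P_1)_G$; this is precisely Theorem \ref{th:UCR:MaxClassP1}. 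Theorem \ref{th:MaxClassConstructionLarge} shows your formula cannot hold: for $p=5$, $m=4$, $n=6$ the constructed group has $\B_0(G)\cong C_5$, while $[P_1,P_1]/[P_1,P_{n-2}]=P_4$ has order $25$; more generally $\rank \B_0(G)\le (p-1)/2$ by Corollary \ref{c:rank-bound-max}, whereas $[P_1,P_1]/[P_1,P_{n-2}]$ can have rank up to $p-1$, so the claimed epimorphism does not exist. As a consequence your nontriviality direction has no working mechanism: to certify that a candidate kernel element is genuinely nontrivial one must show it survives \emph{every} consequence of the defining relations of $G\curlywedge G$, and the paper achieves this by passing to the quotient $G/P_{m+1}$ (where $P_m=[P_1,P_1]$), which has positive degree of commutativity, proving Theorem \ref{th:UCR:MaxClassP1} there by an injectivity argument on an explicit presentation of the group, and then surjecting the coinvariants onto $(P_m/P_{m+1})_G\cong C_p$. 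Some substitute for that presentation-based injectivity step is indispensable in your approach.
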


The {\em degree of commutativity\/} of $G$ is the largest integer $\ell\le n-3$
such that $[P_i,P_j]\leq P_{i+j+\ell}$ for all $i,j\ge 1$. If we assume in the above theorem that $G$ has {\em positive degree of commutativity}, i.e., $\ell > 0$, then we have that $G$ has trivial Bogomolov multiplier precisely when $P_1$ is abelian. This assumption on $\ell$ is very natural when dealing with $p$-groups of maximal class, and in fact it is satisfied whenever $n > p + 1$ \cite[Theorem 3.3.5]{Lee02}.

\begin{corollary}
Let $G$ be a $p$-group of maximal class of positive degree of commutativity.
Then $B_0(G)$ is trivial if and only if $P_1$ is abelian.
\end{corollary}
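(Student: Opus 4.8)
The plan is to read this off directly from Theorem \ref{th:UCR:MaxClassNontriviality}, which already identifies triviality of $\B_0(G)$ with the commutator equality $[P_1,P_1]=[P_1,P_{n-2}]$. The entire task therefore reduces to showing that, under the standing hypothesis $\ell=\ell(G)>0$, this equality is equivalent to $P_1$ being abelian.

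One implication needs nothing about the degree of commutativity: if $P_1$ is abelian then both sides vanish, $[P_1,P_1]=1=[P_1,P_{n-2}]$, and Theorem \ref{th:UCR:MaxClassNontriviality} gives $\B_0(G)=1$ at once. For the converse, the positive degree of commutativity is exactly the leverage I would use to force $[P_1,P_{n-2}]=1$. The cleanest route invokes the recalled fact that $\ell(G)=0$ if and only if $[P_1,P_{n-2}]=P_{n-1}$: since $[P_1,P_{n-2}]\le [\gamma_1(G),\gamma_{n-2}(G)]=P_{n-1}$ is a subgroup of the cyclic group $P_{n-1}$ of order $p$, it is either trivial or all of $P_{n-1}$, and $\ell>0$ excludes the latter. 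Alternatively, one argues quantitatively from the definition of $\ell$: we have $[P_1,P_{n-2}]\le P_{1+(n-2)+\ell}=P_{n-1+\ell}$, and $\ell\ge 1$ makes $n-1+\ell\ge n$, so $P_{n-1+\ell}=1$.

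Once $[P_1,P_{n-2}]=1$ is in place, the criterion of Theorem \ref{th:UCR:MaxClassNontriviality} collapses to $[P_1,P_1]=1$, that is, to $P_1$ being abelian, and the corollary follows. I do not expect a genuine obstacle here; the only point requiring care is the directionality, namely that positive degree of commutativity enters solely to annihilate $[P_1,P_{n-2}]$ and is thus needed only for the ``only if'' implication.
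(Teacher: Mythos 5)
Your argument is correct and is exactly the intended one: the paper states this corollary without proof as an immediate consequence of Theorem \ref{th:UCR:MaxClassNontriviality} together with the recalled fact that positive degree of commutativity forces $[P_1,P_{n-2}]\le P_{n-1+\ell}=1$. Both your routes to $[P_1,P_{n-2}]=1$ are valid, and your observation that $\ell>0$ is only needed for the ``only if'' direction is accurate.
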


We remark that an abelian-by-cyclic finite group is already known to have trivial
Bogomolov multiplier by \cite[Lemma 4.9]{Bog87}.
As a simple example, consider the special cases $p \in \{2,3\}$.
When $p=2$, all the groups have abelian $P_1$, and therefore trivial Bogomolov multipliers.
When $p=3$, all the groups have positive degree of commutativity and $|[P_1,
P_1]| \leq 3$, so we either have that $P_1$ is abelian, in which case $\B_0(G)$
is trivial, or $[P_1, P_1] = P_{n-1}$ and the degree is positive, in which case
$\B_0(G)$ is nontrivial. 

Explicitly determining the structure of $\B_0(G)$ is rather complicated
even in the case when $P_1$ is assumed to be metabelian, still assuming that $G$
has positive degree of commutativity. 
As we will see, both assumptions are satisfied if $n$ is large enough with respect to $p$.
Therefore we focus on this case,
and we give in Theorem \ref{th:UCR:MaxClassP1} a precise description of the Bogomolov multiplier in terms
of the curly exterior square introduced by Moravec \cite{Mor11}.

Remarkably, this allows us to describe the first natural example of a family of $p$-groups having Bogomolov multipliers of unbounded exponent (see Section \ref{s:examples}).

\begin{theorem}
\label{th:MaxClassConstructionLarge}
Let $p\ge 5$ be a prime, and let $m$ and $n$ be integers such that $m\ge 4$ and $m\le n\le 2m-2$.
If we write $n-m+1 = x(p-1)+y$ with $x\ge 0$ and $0\le y<p-1$, then there exists a $p$-group of maximal class of order $p^n$ with $P_1'=P_m$ and Bogomolov multiplier isomorphic to
\[
C_{p^{x+1}}^{\lfloor y/2 \rfloor} \times C_{p^{x}}^{\lfloor (p-1-y)/2 \rfloor}.
\]
\end{theorem}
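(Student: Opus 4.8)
The plan is to combine Theorem \ref{th:UCR:MaxClassP1} with an explicit construction: since we only need the \emph{existence} of a group with the prescribed multiplier, it suffices to exhibit, for every admissible pair $(m,n)$, one $p$-group $G$ of maximal class of order $p^n$ that satisfies the hypotheses of Theorem \ref{th:UCR:MaxClassP1} and has $P_1'=P_m$, and then to compute the coinvariants $(P_1\curlywedge P_1)_G$ for that group. First I would produce the groups. Taking $G$ metabelian is the most economical choice: then $\gamma_2(G)$ is abelian, so a fortiori $P_1'$ is abelian, and the whole commutator calculus is governed by the action of a fixed $s\in G\setminus P_1$ on $\gamma_2(G)$. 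Choosing $s_1\in P_1\setminus P_2$ and $s_i=[s_{i-1},s]$ for $i\ge 2$, the requirement $P_1'=P_m$ translates, because $P_1/P_2$ is cyclic and hence $[P_1,P_1]=[s_1,P_2]$, into the condition that the first nonvanishing bracket $[s_1,s_2]$ lie in $P_m\setminus P_{m+1}$; this forces the degree of commutativity to be $\ell=m-3$, which is positive exactly when $m\ge 4$, matching the hypothesis. I would realise such groups by writing down the corresponding power–commutator presentation and checking its consistency through the Hall--Petrescu collection process, which is well behaved for $p\ge 5$; that these metabelian groups of maximal class exist throughout the range is what the constraints are designed to guarantee.

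With the groups in hand, the core of the argument is the computation of $(P_1\curlywedge P_1)_G$, for which the first step is to pin down $P_1\curlywedge P_1$ together with its $G$-action. Using the defining relations of the curly exterior square and $a\curlywedge b=1$ whenever $[a,b]=1$, the generator $s_i\curlywedge s_j$ can survive only when $[s_i,s_j]\ne 1$, that is (with $\ell=m-3$) for $1\le i<j$ with $i+j\le n-m+2$. The largest index that can then occur is $n-m+1$, and here the bound $n\le 2m-2$ is decisive: it forces $n-m+1<m$, so every occurring index lies strictly below $m$ and the relevant generators all live in the \emph{abelian} section $P_1/P_1'=P_1/P_m$, where $P_1\curlywedge P_1$ is controlled by the exterior square of $\langle\bar s_1,\dots,\bar s_{n-m+1}\rangle$. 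The group $G=\langle s\rangle P_1$ acts through $(s_i\curlywedge s_j)^g=s_i^g\curlywedge s_j^g$; conjugation by $s$ induces the shift $e=s-1$, sending $s_i\curlywedge s_j$ to $s_{i+1}\curlywedge s_j+s_i\curlywedge s_{j+1}$ modulo deeper terms, whereas conjugation by $P_1$ only produces shifts of depth at least $m-1$ and is therefore already swallowed by the image of $e$. Consequently the $G$-coinvariants collapse to the cokernel of $e$ on $P_1\curlywedge P_1$, the positive degree of commutativity being exactly what legitimises this reduction.

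The final, and hardest, step is to evaluate $\operatorname{coker}(e)$ as an abelian group. Over $\mathbb{F}_p$ this is a routine count for a nilpotent shift on a block of length $n-m+1$ and yields only the number of cyclic factors; the subtlety is entirely integral. The relation $s^p=1$ — equivalently the maximal-class fact that $s^p\in P_p$ — forces $e$ to obey the same relation as $1-\zeta$ in $\Z[\zeta]$ for $\zeta$ a primitive $p$-th root of unity, namely that $e^{p-1}$ is associate to $p$, reflecting the ramification index $p-1$ of $p$ in $\mathbb{Q}(\zeta)$. Thus along the shift a drop of $p-1$ in the available length corresponds to exactly one further division by $p$. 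Writing $n-m+1=x(p-1)+y$ with $0\le y<p-1$, the strands retaining a full surplus acquire order $p^{x+1}$ and the remaining ones order $p^{x}$, while the antisymmetry built into $s_i\curlywedge s_j$ (only $i<j$) halves both counts, producing precisely $\lfloor y/2\rfloor$ factors $C_{p^{x+1}}$ and $\lfloor (p-1-y)/2\rfloor$ factors $C_{p^{x}}$. I expect the main obstacle to be exactly this integral cokernel computation: one must track the nilpotent shift and the $p$-adic ramification $e^{p-1}\sim p$ \emph{simultaneously} — in effect a Smith-normal-form analysis of $e$ on the wedge lattice — and verify that the Euclidean division by $p-1$ distributes the $p$-power orders in the claimed pattern, rather than merely predicting the total number of generators as the mod-$p$ shortcut does.
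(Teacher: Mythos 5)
Your overall skeleton---invoke Theorem \ref{th:UCR:MaxClassP1}, identify $P_1\curlywedge P_1$ with a quotient of the exterior square of $P_1/P_1'$, and compute the coinvariants of the shift $e=s-1$ (your observation that $P_1$ acts trivially, so the $G$-coinvariants reduce to $C_p$-coinvariants, is correct)---matches the paper's. But there are genuine gaps. First, you treat the group as interchangeable: you propose to take ``a'' metabelian group of maximal class with $P_1'=P_m$, produced by an unspecified consistent power--commutator presentation, and then compute. The claimed multiplier is in fact the \emph{largest} attainable for the given $n$ and $m$, and it is achieved only for special commutator structures. The paper constructs $G$ through the Leedham-Green--McKay parametrization by the cyclotomic ring $\mathcal O=\Z[\theta]$, choosing the commutator map $\alpha(x\wedge y)=\kappa^{-1}(\sigma_a(x)\sigma_{1-a}(y)-\sigma_a(y)\sigma_{1-a}(x))$ with $a\equiv(g+1)^{-1}\pmod{p}$ for a primitive root $g$, precisely so that $[P_i,P_j]=P_{i+j+\ell}$ whenever $i\not\equiv j\pmod{p-1}$. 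Without such a choice, your asserted equivalence ``$s_i\curlywedge s_j$ survives iff $i+j\le n-m+2$'' is only an implication in one direction (the degree of commutativity forces vanishing for $i+j\ge n-m+3$, but guarantees nothing below that), and the conclusion can fail; note that even for the paper's group the brackets $[s_i,s_j]$ with $i\equiv j\pmod{p-1}$ degenerate and must be handled separately.

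Second, and more seriously, the relation $a\curlywedge b=1$ is imposed for \emph{every} commuting pair $a,b\in P_1$, not only for pairs of the standard generators $s_i$; your computation never controls these extra relations, which could a priori collapse much more of the exterior square. This is the technical heart of the paper's argument: Lemma \ref{lemma:comm-pairs} classifies all centralizers ($C_{P_i}(x)=\langle x,P_{i+j}\rangle$ with $j=\max\{n-2i-\ell,1\}$), and Lemma \ref{lemma:k-alpha} shows, by an induction on levels using the identity $\tilde x\wedge\kappa y+x\wedge y+x\wedge\kappa y=\kappa(\tilde x\wedge y)$, that modulo the augmentation ideal the wedges coming from commuting pairs generate exactly $\mathcal O/\mathfrak p^{m-1}\wedge\mathfrak p^{\mu-1}/\mathfrak p^{m-1}$, whence $\B_0(G)\cong(\mathcal O/\mathfrak p^{\mu-1}\wedge\mathcal O/\mathfrak p^{\mu-1})_{C_p}$ with $\mu-1=n-m+1$. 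Finally, the integral cokernel computation that you yourself flag as the main obstacle is left as a heuristic ($e^{p-1}\sim p$, strands, halving by antisymmetry); the paper does not redo it but reads the answer off from the explicit $C_p$-module decomposition of $\mathcal O/\mathfrak p^{\mu-1}\wedge\mathcal O/\mathfrak p^{\mu-1}$ in \cite[Theorem 8.13]{Lee78b}. As it stands, your proposal would need the explicit construction, the centralizer analysis, and the module decomposition supplied before it becomes a proof.
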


Finally we also explore Bogomolov multipliers of $p$-groups of general coclass
$r$, that is, groups of order $p^n$ and nilpotency class $n-r$ for some $r \geq
1$. In taking $r=1$, one recovers finite $p$-groups of maximal class.
By general coclass theory (see Section \ref{s:coclass}), there is a way of
collecting all but finitely many of these groups into a finite number of
infinite trees arising from pro-$p$ groups. We investigate the structure of
Bogomolov as well as Schur multipliers from this point of view; we improve
a result on the asymptotic shape of the
Schur multiplier for infinite coclass sequences of groups
\cite[Theorem 1]{EF11}, and we extend the same result to the Bogomolov
multiplier.

\section{Maximal class}
\label{s:bogz}

\noindent
In this section, we prove the general criterion for deciding whether or not Bogomolov multipliers of $p$-groups of maximal class are trivial,
Theorem \ref{th:UCR:MaxClassNontriviality}.
This is done by first proving the more technical Theorem \ref{th:UCR:MaxClassP1},
from which we can deduce one of the implications of
Theorem \ref{th:UCR:MaxClassNontriviality}, and then we complete the proof of this theorem.

Before proceeding, we will collect some well-known structural properties of $p$-groups of maximal class that we will require later \cite{Fer01,Hup67,Lee02}.
First of all, recall that we have a chief series $G>P_1>\cdots>P_n=1$
with $P_i = \gamma_i(G)$ for $i \geq 2$. Observe that $P_i=1$ for $i\ge n$, and that $P_2,\ldots,P_n$ are the
only normal subgroups of $G$ of index greater than $p$.
If we pick arbitrary elements $s\in G\setminus (P_1\cup C_G(P_{n-2}))$ and $s_1\in P_1\setminus P_2$, then
$s$ and $s_1$ generate $G$.
We call $s$ a uniformizing element.
Also, we have $C_G(s)=\langle s \rangle P_{n-1}$, which is of order $p^2$.
Let us define $s_i=[s_{i-1},s]$ for all $i\ge 2$.
Then $s_i\in P_i\setminus P_{i+1}$ for $1\le i\le n-1$, and $s_i=1$ for $i\ge n$.
As a consequence, every $g\in G$ can be uniquely written in the form
\begin{equation*}
g = s^{i_0}s_1^{i_1}\ldots s_{n-1}^{i_{n-1}},
\end{equation*}
with $0\le i_j<p$ for all $j=0,\ldots,n-1$.
We refer to this as the {\em normal form\/} of $g$ with respect to $s$ and the $s_i$.
Let $\ell$ be the degree of commutativity of $G$.
Then $\ell\ge 0$, and $\ell=n-3$ if and only if $P_1$ is abelian.
We write $\ell(G)$ when we want to emphasize the group $G$.
A fundamental result of Blackburn is that $\ell>0$ if $n\ge p+2$.
On the other hand, we have $\ell(G/N)\ge \ell(G)$ for every $N\lhd G$.
Also, $\ell(G/Z(G))$ is always positive and, as a consequence, $\ell(G)=0$ if and only if $[P_1,P_{n-2}]=P_{n-1}$.

Let us also recall some of the basic facts related to Bogomolov multipliers
$\B_0(G)$ for arbitrary groups $G$.
In this paper, we will work exclusively with a combinatorial interpretation
of the Bogomolov multiplier as developed by Moravec \cite{Mor12}.
Namely, consider the {\em curly exterior square} $G\curlywedge G$,
which is the group generated by the symbols $x\curlywedge y$ for all pairs $x,y\in G$, subject to the
relations
\[	
\begin{aligned}
\label{eq:ext}
xy\curlywedge z = (x^y\curlywedge z^y)(y\curlywedge z), \quad 
x\curlywedge yz = (x\curlywedge z)(x^z\curlywedge y^z), \quad
a\curlywedge b = 1,
\end{aligned}
\]
for all $x,y,z\in G$ and all $a,b\in G$ with $[a,b]=1$.
By \cite[Theorem 3.2]{Mor12}, there is a canonical epimorphism $G\curlywedge G\to [G,G]$, whose kernel is isomorphic
to $\B_0(G)$.
In this sense, the Bogomolov multiplier can be interpreted as a measure of how the commutator relations in a group fail to follow from the so-called universal ones and from commuting
pairs in $G$, see \cite{Jez13cp}.
This can be taken a step further if one has an explicit presentation of
the underlying group $G$ as $F/R$ with $F$ a free group.
Let $\K(F) = \{ [f,g] \mid f, g \in F \}$ denote the set of
simple commutators in $F$. 
Note that $\langle \K(F) \cap R \rangle$ contains $[R,F]$.
We then have Hopf-type formulas \cite[Proposition 3.8]{Mor12} for both the curly exterior square and the Bogomolov multiplier,
\[
G \curlywedge G \cong \frac{F'}{\langle \K(F) \cap R \rangle}
\quad \textrm{and} \quad
\B_0(G) \cong \frac{F' \cap R}{\langle \K(F) \cap R \rangle}.
\]
Our approach to proving statements about a $p$-group of maximal class $G$ will be to use the above formulae. We also remark that in such a group, 
all elements of $G'$ are simple commutators of the form $[s,g]$ with $g\in G$
and then, by \cite[Theorem 3.11]{Mor12}, 
we have an epimorphism $\B_0(G)\longrightarrow \B_0(G/P_i)$ for every $i\ge 2$.

We are now ready to give a precise description of the Bogomolov multiplier 
of most $p$-groups of maximal class in terms of the curly exterior square
of a maximal subgroup.

\begin{theorem}
\label{th:UCR:MaxClassP1}
Let $G$ be a $p$-group of maximal class with positive degree of commutativity
and $P_1'$ abelian. Let $s \in G \setminus P_1$ be a uniformizing element.
Then $\B_0(G)$ is isomorphic to the coinvariants  $(P_1 \curlywedge P_1)_{\langle s \rangle}$.
\end{theorem}

Here, $P_1\curlywedge P_1$ is seen as an $\langle s \rangle$-module under the action induced by the rule
$(x\curlywedge y)^s=x^s\curlywedge y^s$ for all $x,y\in P_1$.

\begin{proof}[Proof of Theorem \ref{th:UCR:MaxClassP1}]
It is known that any $p$-group of maximal class $G$ admits
a presentation that can be constructed as follows (see \cite[Exercise 3.3(4)]{Lee02}).
Let $F$ be the free group on $t$ and $t_1$. Denote $t_i =
[t_{i-1},t]$ for all $i\ge 2$. Every element $g$ of $G$ has a normal form in terms
of the generating set $s$ and $s_i$ for $1 \leq i \leq n-1$. For a word $w$ of
$F$, let $\llbracket w \rrbracket$ denote the word in $t$ and $t_i$ for $1 \leq
i \leq n-1$ obtained by replacing $s$ with $t$ and $s_i$ with $t_i$ in the
normal form of the element of $G$ that is represented by the word $w$. Denote by
$\rho(w) = w \llbracket w \rrbracket^{-1}$ the relator associated to $w$.
Set
\[
\mathcal R_0 = \{ t_n \}, \;
\mathcal R_1 = \{ \rho(t^p), \rho((tt_1)^p) \}, \;
\mathcal R_2 = \{ \rho([s_{2i}, s_1]) \mid 1 \leq i \leq (p-1)/2 \},
\] 
and let $R$ be the normal subgroup of $F$ generated by $\mathcal R_0 \cup \mathcal R_1 \cup \mathcal R_2$.
Then $F/R$ is a presentation of the group $G$.
We denote by $M = \langle t_1 \rangle^F R$ the maximal subgroup of $F$ that maps onto $P_1$, and we consider the map
\[
\lambda \colon M' \longrightarrow \frac{F' \cap R}{\langle \K(F) \cap R \rangle}
\cong \B_0(G), \qquad
w \longmapsto \rho(w) \langle \K(F) \cap R \rangle.
\]
Observe that $(P_1 \curlywedge P_1)_{\langle s \rangle}$ is isomorphic
to the group $M'/\left( \langle \K(M) \cap R \rangle [M', t] \right)$.
Our objective is to prove that $\lambda$ induces an isomorphism
\[
\bar \lambda \colon \frac{M'}{\langle \K(M) \cap R \rangle [M', t]} \cong (P_1 \curlywedge P_1)_{\langle s \rangle} \longrightarrow \frac{F' \cap R}{\langle \K(F) \cap R \rangle}
\cong \B_0(G).
\]

{\bf I.} {\em We show that $\langle t_m, \dots, t_{n-1} \rangle \cap R \leq \langle \K(F) \cap R \rangle$},
where $m$ is the smallest index with $[P_1, P_1] = P_m$. 
To this end, let $\omega \in \langle t_m, \dots, t_{n-1} \rangle \cap R$.
Observe that since $P_m$ is assumed to be abelian, we have
$[\gamma_m(F), \gamma_m(F)] \leq \langle \K(F) \cap R \rangle$, and so
$\omega$ can be written as
\[
\omega \equiv \textstyle \prod_{i=m}^{n-1} t_i^{a_i} \pmod{\langle \K(F) \cap R \rangle}
\]
with $a_m \equiv 0 \pmod{p}$. Note that
\[
\textstyle [t_{m-1}^p, t] = t_m^{t_{m-1}^{p-1}} t_m^{t_{m-1}^{p-2}} \cdots t_m \equiv t_m^p [t_m, t_{m-1}^{\binom{p}{2}}] \pmod{\langle \K(F) \cap R \rangle}.
\]
Now, if $p > 2$, then $t_{m-1}^{\binom{p}{2}} \in P_m$, and in
the case when $p = 2$, $P_{m-1} = P_{n-1}$ is central in $G$.
We therefore always have
$[t_m, t_{m-1}^{\binom{p}{2}}] \in \K(F) \cap R$.
It follows that 
\[
\textstyle \omega \equiv \left[ \prod_{i=m}^{n-1} t_{i-1}^{a_i}, t \right]
\pmod{\langle \K(F) \cap R \rangle},
\]
and so $\omega \in \langle \K(F) \cap R \rangle$, as desired.

{\bf II.} {\em We claim that $\lambda$
is a homomorphism.} To see this, first observe that since
$P_m$ is assumed to be abelian, we have
$[M' \gamma_m(F),M' \gamma_m(F)] \leq R$.
Now pick any $x,y \in M'$. Note that $\llbracket x \rrbracket, \llbracket y \rrbracket \in \gamma_m(F)$. Hence
\[
\lambda(x)\lambda(y) =
x \llbracket x \rrbracket^{-1} y \llbracket y \rrbracket^{-1} \equiv
xy (\llbracket x \rrbracket \llbracket y \rrbracket)^{-1}
\pmod{\langle \K(F) \cap R \rangle}.
\]
We have $\llbracket x \rrbracket \llbracket y \rrbracket \llbracket xy \rrbracket^{-1} \in R \cap \langle t_m, \dots, t_{n-1} \rangle \leq \langle \K(F) \cap R \rangle$.
We thus conclude
\[
\lambda(x) \lambda(y) = x y \llbracket xy \rrbracket^{-1} = \lambda(xy)
\pmod{\langle \K(F) \cap R \rangle}.
\]

{\bf III.} {\em Let us now show that $\lambda$ is surjective.} Consider the group $F/\langle R \cap \K(F) \rangle$. Its subgroup $R/\langle R
\cap \K(F) \rangle$ is an abelian group that can be generated by the cosets of
the elements of $\mathcal R_1 \cup \mathcal R_2$. Observe that $R/(R \cap F')
\cong RF'/F'$ can be generated by the cosets of elements of $\mathcal
R_1$. Moreover, the elements $t^pF'$ and $t_1^pF'$ form a base of the free
abelian group $RF'/F'$ of rank $2$. Hence we have that the torsion group
$(R \cap F')/\langle R \cap \K(F) \rangle$ is generated by the cosets of the
elements of $\mathcal R_2$. Now note that $\mathcal R_2 \subseteq \rho(M')$. Therefore $\lambda$ is indeed surjective.

{\bf IV.} {\em The homomorphism $\lambda$ factors through $\langle \K(M) \cap R \rangle [M', t]$.}
It is clear that $\langle \K(M) \cap R \rangle$ is contained in the kernel of $\lambda$. To see that the same holds for $[M',t]$, consider
$\lambda(m^t)$ for some $m \in M'$. We have that
$\llbracket m^t \rrbracket \equiv \llbracket m \rrbracket^t$ modulo 
$( R \cap \langle t_m, \dots, t_{n-1} \rangle ) \langle t_n \rangle^F \leq \langle \K(F) \cap R \rangle$.
Whence
\[
\lambda(m^t) = m^t \llbracket m^t \rrbracket^{-1} \equiv m^t \llbracket m \rrbracket^{-t} = m \llbracket m \rrbracket^{-1} [m \llbracket m \rrbracket^{-1},t] \equiv \lambda(m) \pmod{\langle \K(F) \cap R \rangle},
\]
proving our claim. There is thus an induced homomorphism
\[
\bar \lambda \colon \frac{M'}{\langle \K(M) \cap R \rangle [M', t]} \cong (P_1 \curlywedge P_1)_{\langle s \rangle} \longrightarrow \frac{F' \cap R}{\langle \K(F) \cap R \rangle}
\cong \B_0(G).
\]

{\bf V.} {\em The homomorphism $\bar \lambda$ is injective.}
To this end, let $w \in M'$ represent an element in $\ker \bar \lambda$.
So $w \llbracket w \rrbracket^{-1} \in \langle \K(F) \cap R \rangle$.
Write $w = \llbracket w \rrbracket \cdot \prod_i [x_i, y_i]$ for some $[x_i, y_i] \in R$.
Collect all those indices $i$ for which $x_i, y_i \in M$ into a set $I$. Eventually replacing $w$ by $w \prod_{i \in I} [x_i, y_i]^{-1}$, we may assume that
$x_i \not\in M$ and $y_i \in M$ for all indices $i$. Write $x_i = t^{a_i} m_i$ for
some $1 \leq a_i < p$ and $m_i \in M$.
Since $[x_i, y_i] \in R$, 
it follows that $y_i = t_{n-1}^{b_i} r_i$ for some $0 \leq b_i < p$ and $r_i \in R$.
So $[x_i, y_i] = [t^{a_i} m_i, t_{n-1}^{b_i} r_i] \equiv [t^{a_i}, t_{n-1}^{b_i} r_i] [m_i, t_{n-1}^{b_i} r_i]$.
Now $[m_i, t_{n-1}^{b_i} r_i] \in \langle \K(M) \cap R \rangle$, so that
$[x_i, y_i] \equiv [t, t_{n-1}]^{a_i b_i} [t, r_i]^{a_i}$.
Thus we may write
$w \equiv \llbracket w \rrbracket \cdot [t, t_{n-1}^a] [t, r]$ for some
$0 \leq a < p$ and $r \in R$.
As $R \leq \langle M', t^p, t_m, \dots, t_{n-1} \rangle^F$, it follows that
$R$ can be generated modulo $R \cap M'$ by words in $t^p, t_m, \dots, t_{n-1}$.
We are in a setting where $[M',t] \equiv 1$, so 
we can write
$w \equiv \prod_{i \geq m} t_i^{c_i}$ for some integers $c_i$.

Note that since $P_m$ is assumed to be abelian,
the image of the group $\langle t_i \mid i \geq m \rangle$
in $M/\langle \K(M) \cap R \rangle$ is abelian, and so it is a quotient of
the free abelian group generated by the elements $t_i$ for $i \geq m$.
Moreover, the group $M/M'$ is the quotient of the free abelian group
generated by the elements $t^p$ and $t_i$
for $i \geq 1$ subject only to the relations
\[
\prod_{i = 1}^p t_{j + i}^{\binom{p}{i}} \equiv 1 \pmod{M'}
\]
for all $j \geq 1$. These arise from expanding
$[t_j, t^p] \in M'$.
The element $w$ belongs to $M'$,
therefore it can be written modulo $\langle \K(M) \cap R \rangle$
as a product of some powers of elements of the form
$\prod_{i=1}^p t_{j+i}^{\binom{p}{i}}$ for some $j \geq m$.
Now, observe that
\[
\prod_{i=1}^p t_{j+i}^{\binom{p}{i}} \equiv [t_j, t^p] \equiv 1 \pmod{\langle \K(M) \cap R \rangle}
\]
for all $j \geq m-1$.
Therefore $w \equiv 1$ in the domain of $\bar\lambda$ and the proof is complete.
\end{proof}

In the next corollary, we use arithmetical conditions to have high degree of commutativity
from \cite{Fer95} in order to show that Theorem \ref{th:UCR:MaxClassP1} covers all but finitely many $p$-groups of maximal class for every prime $p$.

\begin{corollary}
Let $G$ be a $p$-group of maximal class of order $p^n$.
If
\[
n\ge \max\{p+2,6p-29\},
\]
then $\B_0(G) \cong (P_1 \curlywedge P_1)_{\langle s \rangle}$.
\end{corollary}

\begin{proof}
The condition $n\ge p+2$ ensures that $G$ has positive degree of commutativity.
Hence, according to Theorem \ref{th:UCR:MaxClassP1} it suffices to check that $P_1$ is metabelian.
If $p=2$, $3$ or $5$, then the degree of commutativity $\ell$ of $G$ is at least $n-3$, $n-4$ or $(n-6)/2$, respectively, and the result readily follows.
On the other hand, if $p\ge 7$ then $2\ell \ge n-2p+5$ by \cite{Fer95}.
Since $P_1'=[P_1,P_2]\le P_{\ell+3}$, we have
\[
[P_1',P_1'] \le [P_{\ell+3},P_{\ell+3}] = [P_{\ell+3},P_{\ell+4}] \le P_{3\ell+7},
\]
where
\[
3\ell + 7 \ge \frac{3}{2}(n-2p+5) + 7 = n + \frac{1}{2}(n-6p+29) \ge n,
\]
since $n\ge 6p-29$ by hypothesis.
Thus $P_1$ is metabelian, as desired.
\end{proof}

Now we continue by proving Theorem \ref{th:UCR:MaxClassNontriviality}.
We will use Theorem \ref{th:UCR:MaxClassP1} for this together with
the fact that, if $[P_1, P_1] = P_m$, then $\B_0(P)$ maps onto $\B_0(P / P_{m+1})$.

\begin{proof}[Proof of Theorem \ref{th:UCR:MaxClassNontriviality}]
We first assume that $[P_1,P_1]=P_m$ is strictly larger than $[P_1,P_{n-2}]$, and prove that $\B_0(G)$ is nontrivial.
If $\ell(G)=0$ then $[P_1,P_{n-2}]=P_{n-1}$, and so $G/P_{m+1}$ is a proper quotient of $G$.
We conclude that $\ell(G/P_{m+1})>0$ in every case.
Then we may apply Theorem \ref{th:UCR:MaxClassP1} to $G/P_{m+1}$ to get
$\B_0(G/P_{m+1})\cong (P_1/P_{m+1} \curlywedge P_1/P_{m+1})_G$.
Since $\B_0(G)$ surjects onto $\B_0(G/P_{m+1})$ and $(P_1/P_{m+1} \curlywedge P_1/P_{m+1})_G$ surjects onto
$[P_1/P_{m+1},P_1/P_{m+1}]_G=(P_m/P_{m+1})_G\cong C_p$, we conclude that $\B_0(G)\ne 1$.

Now we prove the converse, namely that the condition $[P_1,P_1]=[P_1,P_{n-2}]$ implies that $\B_0(G)=1$.
If $P_1$ is abelian, then $G$ is abelian-by-cyclic and hence $\B_0(G)$ is trivial by \cite{Bog87}.
So assume that $P_1$ is not abelian.
The restriction $[P_1, P_1] = [P_1, P_{n-2}]$ gives $[P_1, P_1] = P_{n-1}$.
Note that $P_{n-1}$ is generated by the element $s_{n-1}$ of order $p$.
Moreover, since $[P_1, P_{n-2}] = P_{n-1}$ and $[P_2, P_{n-2}] = 1$, there exists a $\lambda \neq 0$ mod $p$ with $[s_1, s_{n-2}] = s_{n-1}^\lambda$.
The latter equality may be rewritten as $[s^{\lambda} s_1, s_{n-2}] = 1$.
Expanding $1 = s^\lambda s_1 \curlywedge s_{n-2}$ in the curly exterior square $G \curlywedge G$ gives
\[
	(s^\lambda \curlywedge s_{n-2})^{s_1} (s_1 \curlywedge s_{n-2}) =
	(s^\lambda [s^\lambda, s_1] \curlywedge s_{n-2} s_{n-1}^{-\lambda}) (s_1 \curlywedge s_{n-2}) =
	(s \curlywedge s_{n-2})^\lambda (s_1 \curlywedge s_{n-2}),
\]
therefore $(s_{n-2} \curlywedge s)^\lambda =  (s_1 \curlywedge s_{n-2})$.
Furthermore, pick any $s_i, s_j, s_k, s_l$ in $P_1$ and assume that both of the elementary wedges $s_i \curlywedge s_j$ and $s_k \curlywedge s_l$ are nontrivial in $G \curlywedge G$.
As $[P_1, P_1] = P_{n-1}$, both of the commutators $[s_i, s_j]$ and $[s_k, s_l]$ equal a power of $s_{n-1}$.
Since $\B_0(P_1)$ is trivial by \cite[Corollary 4.1]{Jez13cp}, there exists an $m > 0$ such that $(s_i \curlywedge s_j) (s_k \curlywedge s_l)^m \in \B_0(P_1)$ is trivial.
The natural homomorphism $\B_0(P_1) \to \B_0(G)$ shows that $(s_i \curlywedge s_j) (s_k \curlywedge s_l)^m$ is also trivial in $G \curlywedge G$.
Hence all the elementary wedges $s_i \curlywedge s_j$ are equal to a power of the nontrivial one $s_1 \curlywedge s_{n-2}$. 

Now let $w$ be an arbitrary element of $\B_0(G)$.
For any $x,y \in P_1$ and $g,h \in G$, we have $[x \curlywedge y, g \curlywedge h] = [x,y] \curlywedge [g,h] = 1$ in $G \curlywedge G$, since $P_{n-1} = Z(G)$.
Note also that $[s_i \curlywedge s, s_j \curlywedge s] = s_{i+1} \curlywedge s_{j+1}$.
Therefore the element $w$ can be written as
\[
	\textstyle w = \prod_{i=1}^{n-2} (s_i \curlywedge s)^{\alpha_i} \cdot (s_1 \curlywedge s_{n-2})^{\beta}
\]
for some integers $\alpha_i, \beta$.
Observe that
\[
1 = s_i \curlywedge s^p = (s_i \curlywedge s^{p-1}) (s_i \curlywedge s)^{s^{p-1}}
=  \textstyle   (s_i \curlywedge s)^p \cdot \prod_{j > i} (s_j \curlywedge s)^{a_j} \cdot (s_1 \curlywedge s_{n-2})^{b}
\]
for some $a_j,b$.
We may thus assume that $0 \leq \alpha_i < p$, and the same for $\beta$.
Note that $w$ belongs to $\B_0(G)$ if and only if we have $\prod_{i=1}^{n-2} s_{i+1}^{\alpha_i} \cdot [s_1, s_{n-2}]^{\beta} = 1$ in $G$.
Collecting the left hand side in its normal form and comparing exponents gives $\alpha_i = 0$ for all $i \leq n-3$ and $\alpha_{n-2} + \lambda \beta = 0$.
We thus have $w = ((s \curlywedge s_{n-2})^{\lambda} (s_1 \curlywedge s_{n-2}))^{\beta}$, and so $w = 1$ by above.
Hence $\B_0(G)$ is trivial, as required.
\end{proof}

%
%
%

\section{Examples}
\label{s:examples}

\noindent
In this section,  we show how Theorem \ref{th:UCR:MaxClassP1} can be utilized to
explicitly determine the structure of Bogomolov multipliers of some particular
$p$-groups of maximal class (for $p\ge 5$). As a consequence, we prove Theorem
\ref{th:MaxClassConstructionLarge}.

To do this, we recall that the structure of a $p$-group $G$ of maximal class with $P_1$ of nilpotency class $2$ can be given in terms of the ring of integers in the $p$-th cyclotomic number field $\mathcal O$.
So $\mathcal O = {\mathbb Z}[\theta]/(1+ \theta + \cdots + \theta^{p-1})$, where $\theta$
is a primitive complex $p$-th root of unity. Denote $\kappa = \theta - 1$ and let $\mathfrak p = (\kappa)$.
There is an action of $\mathcal O$ on $P_m$ with $\theta$ acting via conjugation by $s$.
By \cite[Lemma 8.2.1]{Lee02}, there is an $\mathcal O$-module isomorphism
between $P_i/P_{i+j}$ and $\mathcal O / \mathfrak p^j$, induced by the map
\[
\mathcal O \to P_i/P_{i+j}, \quad \sum_u a_u \kappa^u \mapsto \prod_u s_{i+u}^{a_u}.
\]
The commutator structure of $P_1$ can thus be understood in terms of the 
homomorphism
\[
\alpha \colon \mathcal O/\mathfrak p^{m-1} \wedge \mathcal O/\mathfrak p^{m-1}
\to
\mathcal O/\mathfrak p^{n-m} \cong P_m.
\]
This is in fact a homomorphism of $\langle \theta \rangle = C_p$ modules.
Set
\[
\K \alpha =
\langle \ker \alpha \cap \{ {\textrm{elementary wedges in }}\mathcal O/\mathfrak p^{m-1} \wedge \mathcal O/\mathfrak p^{m-1} \} \rangle.
\]
Now consider the induced epimorphism
\[
\alpha_{C_p} \colon
(\mathcal O/\mathfrak p^{m-1} \wedge \mathcal O/\mathfrak p^{m-1})_{C_p}
\to
(\mathcal O/\mathfrak p^{n-m})_{C_p} \cong \mathcal O/\mathfrak p \cong P_m/P_{m+1} \cong C_p
\]
obtained by factoring out the action of $\theta$. Correspondingly, there is the
induced kernel
\[
\K \alpha_{C_p} = \langle \ker \alpha_{C_p} \cap \{ {\textrm{image of elementary wedges in }}(\mathcal O/\mathfrak p^{m-1} \wedge \mathcal O/\mathfrak p^{m-1})_{C_p} \} \rangle.
\]
Notice that
\[
\frac{(\mathcal O/\mathfrak p^{m-1} \wedge \mathcal O/\mathfrak p^{m-1})_{C_p}}
{\K \alpha_{C_p}}
\cong
\left( \frac{\mathcal O/\mathfrak p^{m-1} \wedge \mathcal O/\mathfrak p^{m-1}}
{\K \alpha} \right)_{C_p}
\]
by right-exactness of coinvariants. We make the following identification:
\[
P_1 \curlywedge P_1 = \frac{P_1/P_m \wedge P_1/P_m}{\langle xP_m \wedge yP_m \mid [x,y] = 1 \rangle} \cong
\frac{\mathcal O/\mathfrak p^{m-1} \wedge \mathcal O/\mathfrak p^{m-1}}
{\K \alpha}
\]
Now, to provide concrete examples, we show that by carefully
selecting the map $\alpha$, which in turn determines the group $G$, one may achieve that the image of the map
$\K \alpha_{C_p}$
in $(\mathcal O/\mathfrak p^{m-1} \wedge \mathcal O/\mathfrak
p^{m-1})_{C_p}$ is trivial. Based on the above identification, this amounts to constructing groups $G$ with $\B_0(G) \cong
(\mathcal O/\mathfrak p^{m-1} \wedge \mathcal O/\mathfrak p^{m-1})_{C_p}$. Such
a commutator structure will therefore produce groups whose Bogomolov multipliers
will have largest possible rank and exponent for the given values of $n$ and
$m$. Furthermore, essentially the same argument will deal with quotients of such
extreme groups. The construction we give below covers this more general case.

Fix any $m \geq 4$ and set $\ell = m - 3$.  The number $\ell$ will be the degree of
commutativity of the constructed group. Now pick any $n$ satisfying $m < n \leq
2 m - 2$. Set ${\mu} = n - m + 2$, so that $2 < {\mu} \leq m$. Let $g$ be a
primitive root modulo $p$ and pick an integer $a$ so that $a \equiv (g + 1)^{-1}
\pmod{p}$. It is here that we need $p\ge 5$. In the case when $a > (p-1)/2$,
replace $a$ by $1-a$, so that in the end, $2 \leq a \leq (p-1)/2$. Now define $
\alpha \colon \mathcal O/\mathfrak p^{m-1} \wedge \mathcal O/\mathfrak p^{m-1}
\to \mathcal O/\mathfrak p^{n-m} $ by the rule
\[
\alpha(x \wedge y) = \kappa^{-1} \cdot(\sigma_a(x)\sigma_{1-a}(y) - \sigma_a(y)\sigma_{1-a}(x))
\]
for $x,y \in \mathcal O/\mathfrak p^{m-1}$. Here, $\sigma_a$ is the automorphism
of $\mathcal O/\mathfrak p^{m-1}$ which maps $\theta$ to $\theta^a$.
This corresponds to the map induced by $\kappa^{-1} S_a$ in \cite[Theorem 8.3.1]{Lee02}. 
Set $u_a = (\theta^a - 1)/\kappa \in \mathcal O^*$. Then
\[
\alpha(\kappa^i \wedge \kappa^j) = \mathrm{sgn}(i-j) \kappa^{i+j-1} (u_a u_{1-a})^{\min \{ i,j \}} (u_a^{|i-j|} - u_{1-a}^{|i-j|}) \in \mathfrak p^{i+j-1}.
\]
Observe that $u_a^{|i-j|} - u_{1-a}^{|i-j|} \equiv a^{|i-j|} - (1-a)^{|i-j|}
\pmod{\mathfrak p}$. This element belongs to $\mathfrak p$ if and only if we
have $(a^{-1} - 1)^{|i-j|} \equiv 1 \pmod{p}$. By our choice of $a$, this occurs
precisely when $i \equiv j \pmod{p-1}$. The commutator map $\alpha$ therefore
satisfies $\alpha(\kappa^i \wedge \kappa^j) \in \mathfrak p^{i+j-1} \backslash
\mathfrak p^{i+j}$ whenever $i \not\equiv j \pmod{p-1}$.

Invoking \cite[Theorem 8.2.7]{Lee02}, there is a $p$-group $G$ of maximal class
of order $p^n$ whose commutator structure is described by the map $\alpha$ given
above. In terms of the $P_i$-series of $G$, the above discussion shows that we
have $[P_{i}, P_{j}] = P_{i+j+\ell}$ for all $i,j \geq 1$ that satisfy $i
\not\equiv j \pmod{p-1}$.

This highly restricted commutator structure enables us to completely understand
commuting pairs of $G$.

\begin{lemma}
\label{lemma:comm-pairs}
Let $x \in P_i \backslash P_{i+1}$. Then
$C_{P_i}(x) = \langle x, P_{i+j} \rangle$, where $j = \max \{ n - 2i - \ell, 1 \}$.
\end{lemma}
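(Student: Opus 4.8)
The plan is to prove the two inclusions $\langle x,P_{i+j}\rangle\subseteq C_{P_i}(x)$ and $C_{P_i}(x)\subseteq\langle x,P_{i+j}\rangle$ separately, exploiting the explicit commutator structure recorded above. Throughout I would use three facts about the constructed group. First, since $\alpha$ is alternating, commutators along the ``diagonal'' are deeper than the degree of commutativity predicts: if $u,v\in P_a\setminus P_{a+1}$, then $[u,v]\in P_{2a+\ell+1}$. Second, off the diagonal the commutators attain the generic depth: if $u\in P_a\setminus P_{a+1}$ and $v\in P_b\setminus P_{b+1}$ with $a\not\equiv b\pmod{p-1}$ and $a+b+\ell\le n-1$, then $[u,v]\in P_{a+b+\ell}\setminus P_{a+b+\ell+1}$, which follows from $\alpha(\kappa^{a-1}\wedge\kappa^{b-1})\in\mathfrak p^{a+b-3}\setminus\mathfrak p^{a+b-2}$ under the identifications $s_k\leftrightarrow\kappa^{k-1}$ and $P_{m+t}\leftrightarrow\mathfrak p^t$. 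Third, raising to $p$-th powers shifts depth by exactly $p-1$, so that $x^{p^t}\in P_{i+t(p-1)}\setminus P_{i+t(p-1)+1}$ as long as $i+t(p-1)<n$; thus $\langle x\rangle$ meets precisely the layers $P_r$ with $r\equiv i\pmod{p-1}$.

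The inclusion $\langle x,P_{i+j}\rangle\subseteq C_{P_i}(x)$, together with the whole statement in the degenerate regime $j=1$, follows at once from the depth estimates. Indeed $\langle x\rangle\subseteq C_{P_i}(x)$ is trivial, and for any $z\in P_i$ of depth $r\ge i$ we have $[x,z]\in P_{i+r+\ell}$, or even $[x,z]\in P_{2i+\ell+1}$ when $r=i$ by the diagonal bound. In the regime $j=n-2i-\ell$ this gives $[x,P_{i+j}]\subseteq[P_i,P_{n-i-\ell}]\subseteq P_n=1$; in the regime $j=1$, that is $2i+\ell\ge n-1$, the same two estimates give $[x,P_i]=1$, so $x$ is central in $P_i$ and $C_{P_i}(x)=P_i=\langle x,P_{i+1}\rangle$. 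Hence it remains only to establish the reverse inclusion in the generic regime $j=n-2i-\ell\ge 1$, where $i+j=n-i-\ell$.

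For this I would argue by descent along the lower central series. Let $y\in C_{P_i}(x)$; the claim is $y\in\langle x\rangle P_{n-i-\ell}$. Assuming $y\notin P_{n-i-\ell}$, let $r<n-i-\ell$ be its depth, so $y\in P_r\setminus P_{r+1}$ with $r\ge i$. If $r\not\equiv i\pmod{p-1}$, then $i+r+\ell\le n-1$ and the off-diagonal estimate yields $[x,y]\in P_{i+r+\ell}\setminus P_{i+r+\ell+1}$, contradicting $y\in C_{P_i}(x)$; so this case cannot occur. If $r\equiv i\pmod{p-1}$, then $x^{p^t}$ with $t=(r-i)/(p-1)$ also has depth $r$, so one may choose an integer $e$ with $y(x^{p^t})^{-e}\in P_{r+1}$, and this element again lies in $C_{P_i}(x)$. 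Iterating the reduction and collecting the successive corrections into a single power of $x$, one pushes $y$ into $P_{n-i-\ell}$ modulo $\langle x\rangle$, which proves $y\in\langle x\rangle P_{n-i-\ell}=\langle x,P_{i+j}\rangle$ and completes the argument.

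The main obstacle is in securing the second and third structural facts. Transferring the subgroup-level identity $[P_a,P_b]=P_{a+b+\ell}$ to the sharp statement about individual elements requires controlling the higher-order terms in the bilinear expansion of $[u,v]$ and verifying that none of them cancels the leading term $\alpha(\kappa^{a-1}\wedge\kappa^{b-1})$; this is exactly where the explicit formula for $\alpha$ and the congruence dichotomy modulo $p-1$ do the real work. Likewise, the clean $p$-power shift must be read off from the $\mathcal O$-module description, with some care when the depth $i+t(p-1)$ crosses the level $m$ separating the abelian quotient $P_1/P_m$ from $P_m=P_1'$, so that the two compatible $\mathcal O$-module structures line up.
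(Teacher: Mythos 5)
Your strategy is essentially the paper's. Both arguments obtain the inclusion $\langle x,P_{i+j}\rangle\subseteq C_{P_i}(x)$ (and the degenerate case $j=1$) from the degree-of-commutativity bounds, and both prove the reverse inclusion by taking a centralizing element $y$ of some depth $k>i$, peeling off powers of $x$ to reduce to the case $k\not\equiv i\pmod{p-1}$, and then deriving a contradiction from the off-diagonal behaviour of the commutator map. Where you differ is the finishing move, and that is also where your write-up has its only real gap: you rest the contradiction on the element-level sharpening that $u\in P_a\setminus P_{a+1}$, $v\in P_b\setminus P_{b+1}$, $a\not\equiv b\pmod{p-1}$ and $a+b+\ell\le n-1$ force $[u,v]\notin P_{a+b+\ell+1}$, and you explicitly defer its proof as ``the main obstacle''. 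The paper does not need this statement at all: since the layers $P_r/P_{r+1}$ are cyclic, $P_i=\langle x\rangle P_{i+1}$ and $P_k=\langle y\rangle P_{k+1}$, so $[x,y]=1$ yields $P_{i+k+\ell}=[P_i,P_k]\le[P_{i+1},P_k][P_i,P_{k+1}]\le P_{i+k+\ell+1}$, which forces $i+k+\ell\ge n$. This uses only the subgroup identity $[P_i,P_k]=P_{i+k+\ell}$ for $i\not\equiv k\pmod{p-1}$, already established before the lemma.

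That said, your deferred step is true and costs one line, not a delicate analysis of ``higher-order terms'': in these groups $P_1$ has class two with $P_m\le Z(P_1)$, so the commutator map on $P_1$ genuinely is the bi-additive map $\alpha$ on $\mathcal O/\mathfrak p^{m-1}$; writing $\bar u=c\kappa^{a-1}+\cdots$ and $\bar v=d\kappa^{b-1}+\cdots$ with $c,d$ units modulo $\mathfrak p$, every cross term $\alpha(\kappa^s\wedge\kappa^t)$ with $s+t>a+b-2$ lies in $\mathfrak p^{a+b-2}$, while the leading term $cd\,\alpha(\kappa^{a-1}\wedge\kappa^{b-1})$ has exact valuation $a+b-3$ by the computation preceding the lemma. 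So your proof does close, but the paper's subgroup-level argument is shorter and bypasses the module computation entirely; I would adopt it.
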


\begin{proof}
Clearly the right hand side centralizes $x$. Conversely, suppose that $y \in P_k
\backslash P_{k+1}$ for some $k > i$ and $[x,y] = 1$. Assume that $y \notin
\langle x \rangle$. If $k \equiv i \pmod{p-1}$, then $y = x^{r} z$ for some $r >
0$ and $z \in P_{k'} \backslash P_{k'+1}$ with $[z,x] = 1$ and $k' \not\equiv i
\pmod{p-1}$. In this case, replace $y$ by $z$ and $k$ by $k'$, so that we may
assume $k \not\equiv i \pmod{p-1}$. Now, since $[P_i, P_k] = P_{i+k+\ell}$ and
$[P_{i+1},P_k][P_i,P_{k+1}] \leq P_{i+k+\ell+1}$, it follows that $P_{i+k+\ell} =
P_{i+k+\ell+1}$, which is only possible when $i+k+\ell \geq n$.
\end{proof}

In particular, note that $Z(P_1) \geq P_{{\mu}} \geq P_m$ in the group $G$.
Transferring to the $C_p$-module $\mathcal O/\mathfrak p^{m-1} \wedge \mathcal
O/\mathfrak p^{m-1}$, we thus have that the elementary wedges in $\mathfrak
p^{\mu - 1}/\mathfrak p^{m-1} \wedge \mathcal O/\mathfrak p^{m-1}$ are all
contained in $\K \alpha$. Using Lemma \ref{lemma:comm-pairs} more precisely, we
now show that wedges that arise from commuting pairs are, modulo the action of
$C_p$, nothing but the latter.

\begin{lemma} \label{lemma:k-alpha}
$\K \alpha_{C_p} = (\mathcal O/\mathfrak p^{m-1} \wedge \mathfrak p^{{\mu}-1}/\mathfrak p^{m-1}) + [\mathcal O/\mathfrak p^{m-1} \wedge \mathcal O/\mathfrak p^{m-1}, C_p]$.
\end{lemma}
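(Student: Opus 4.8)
The plan is to prove the two inclusions separately, reading $\K\alpha_{C_p}$ as the image in the coinvariants of the subgroup generated by the elementary wedges arising from commuting pairs, i.e.\ those $x\wedge y$ with $[x,y]=1$, equivalently $\alpha(x\wedge y)=0$. The inclusion $\supseteq$ is almost immediate: the summand $[\mathcal O/\mathfrak p^{m-1}\wedge\mathcal O/\mathfrak p^{m-1},C_p]$ is exactly the kernel of the quotient onto the coinvariants, so it lies in $\K\alpha_{C_p}$ for free; and the remark preceding the lemma shows that every elementary wedge with a factor in $\mathfrak p^{\mu-1}/\mathfrak p^{m-1}$ belongs to $\ker\alpha$, because $P_{\mu}\le Z(P_1)$. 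Hence $\mathcal O/\mathfrak p^{m-1}\wedge\mathfrak p^{\mu-1}/\mathfrak p^{m-1}\subseteq\K\alpha$, and its image is contained in $\K\alpha_{C_p}$.

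For the reverse inclusion it suffices to treat a single generating wedge $x\wedge y$ coming from a commuting pair $x,y\in P_1$. Labelling so that the leading $\mathfrak p$-valuations satisfy $s\le t$, we have $y\in P_{s+1}$, and since $y$ commutes with $x$, Lemma \ref{lemma:comm-pairs} places $y$ in $\langle x,P_{s+1+j}\rangle$ with $j=\max\{n-2(s+1)-\ell,1\}$. Subtracting the relevant power of $x$ from $y$ leaves $x\wedge y$ unchanged and lets me assume $y\in\mathfrak p^{s+j}/\mathfrak p^{m-1}$, so that $t\ge s+j$. Substituting $\ell=m-3$ and $\mu=n-m+2$, the generic value of $j$ gives $2s+j=\mu-1$ exactly, while the boundary value $j=1$ (which forces $2s\ge\mu-1$) gives $2s+j>\mu-1$; either way the index-sum bound $s+t\ge 2s+j\ge\mu-1$ holds.

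The second ingredient is a purely module-theoretic reduction showing that any wedge of index-sum at least $\mu-1$ already lies in $\mathcal O/\mathfrak p^{m-1}\wedge\mathfrak p^{\mu-1}/\mathfrak p^{m-1}$ modulo $[\,\cdot\,,C_p]$. Passing to the quotient by $\mathcal O/\mathfrak p^{m-1}\wedge\mathfrak p^{\mu-1}/\mathfrak p^{m-1}$, that is, working inside $(\mathcal O/\mathfrak p^{\mu-1}\wedge\mathcal O/\mathfrak p^{\mu-1})_{C_p}$, I would exploit the identity $(\theta-1)(\kappa^i\wedge\kappa^j)=\kappa^i\wedge\kappa^{j+1}+\kappa^{i+1}\wedge\kappa^j+\kappa^{i+1}\wedge\kappa^{j+1}$ together with $\kappa^{\mu-1}=0$. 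Setting the top index equal to $\mu-1$ annihilates the two outer terms and yields $\kappa^k\wedge\kappa^{\mu-2}\equiv 0$ for all $k\ge 1$; a downward induction on the second index then propagates the vanishing to every $\kappa^k\wedge\kappa^l$ with $k+l\ge\mu-1$. Expanding $x\wedge y$ into monomials $\kappa^a\wedge\kappa^b$ with $a\ge s$ and $b\ge s+j$, each of index-sum at least $s+t\ge\mu-1$, every term then vanishes in the quotient, which is precisely the statement $x\wedge y\in\mathcal O/\mathfrak p^{m-1}\wedge\mathfrak p^{\mu-1}/\mathfrak p^{m-1}+[\mathcal O/\mathfrak p^{m-1}\wedge\mathcal O/\mathfrak p^{m-1},C_p]$.

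I expect the main obstacle to be the bookkeeping that bridges the numerical and module-theoretic steps: justifying cleanly that reducing $y$ modulo $\langle x\rangle$ leaves the wedge class untouched while producing genuinely comparable valuations, and verifying the index-sum bound uniformly across the two regimes of the maximum defining $j$ (the case $j=1$, where $s$ is already large, must be checked on its own). The downward induction itself is robust, but since $\mathcal O/\mathfrak p^{\mu-1}$ is not elementary abelian—the powers $\kappa^i$ need not have order $p$—I would phrase the entire reduction in terms of the $\mathfrak p$-adic filtration rather than a choice of basis, so that the expansion of a general $x\wedge y$ into monomials of controlled valuation stays valid.
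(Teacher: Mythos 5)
Your argument is correct and rests on the same key identity as the paper's proof, namely $(\theta-1)(\kappa^i\wedge\kappa^j)=\kappa^i\wedge\kappa^{j+1}+\kappa^{i+1}\wedge\kappa^j+\kappa^{i+1}\wedge\kappa^{j+1}$, but you decompose the work genuinely differently. The paper runs its induction directly on wedges $x\wedge y$ of commuting pairs: at every step it must re-invoke the commutator structure (the congruence condition $i\not\equiv j\pmod{p-1}$ and the bound $i+j+\ell\ge n$) to certify that the auxiliary wedges $\tilde x\wedge\kappa y$ and $x\wedge\kappa y$ again lie in $\K\alpha$, so the group theory and the module algebra stay entangled throughout. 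You instead invoke Lemma \ref{lemma:comm-pairs} exactly once, to normalize $y$ modulo $\langle x\rangle$ and extract the single numerical consequence $s+t\ge 2s+j\ge\mu-1$ (your check of both regimes of the maximum is right: $2s+j=n-2-\ell=\mu-1$ generically, and $2s+1\ge\mu-1$ when $j=1$), after which the remainder is a purely module-theoretic vanishing statement: every monomial $\kappa^a\wedge\kappa^b$ with $a+b\ge\mu-1$ dies in $(\mathcal O/\mathfrak p^{\mu-1}\wedge\mathcal O/\mathfrak p^{\mu-1})_{C_p}$, proved by downward induction on the second index starting from $\kappa^k\wedge\kappa^{\mu-2}$. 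This buys a cleaner separation of concerns --- the second half never needs to know that any wedge arises from a commuting pair, and the case analysis on $i\equiv j\pmod{p-1}$ is absorbed into the statement of Lemma \ref{lemma:comm-pairs} rather than redone. The points you flag as needing care are the right ones and are routine: subtracting $rx$ from $y$ fixes the wedge by bilinearity, and the quotient of $\mathcal O/\mathfrak p^{m-1}\wedge\mathcal O/\mathfrak p^{m-1}$ by $\mathcal O/\mathfrak p^{m-1}\wedge\mathfrak p^{\mu-1}/\mathfrak p^{m-1}$ together with the augmentation is precisely $(\mathcal O/\mathfrak p^{\mu-1}\wedge\mathcal O/\mathfrak p^{\mu-1})_{C_p}$, so vanishing there is equivalent to the claimed membership in the right-hand side of the lemma.
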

\begin{proof}

Let $x \wedge y \in \K \alpha$ for some $x,y \in \mathcal O/\mathfrak p^{m-1}$.
Suppose that $x$ corresponds to an element in $P_i \backslash P_{i+1}$ and $y$
to an element in $P_j \backslash P_{j+1}$ with $i \leq j$. We will prove that $x
\wedge y$ is equivalent to an element of the submodule $\mathfrak p^{{\mu} -
1}/\mathfrak p^{m-1} \wedge \mathcal O/p^{m-1}$ modulo $[\mathcal O/\mathfrak
p^{m-1} \wedge \mathcal O/\mathfrak p^{m-1}, C_p]$ by induction on $i$.

If $i \equiv j \pmod{p-1}$, then as in the proof of Lemma 
\ref{lemma:comm-pairs},
we may write $y = x^r z$ with $z \in P_{j'}\backslash P_{j'+1}$ and $j'
\not\equiv i \pmod{p-1}$. Then $x \wedge y = x \wedge z$, so we may without loss
of generality assume that $i \not\equiv j \pmod{p-1}$. By the lemma, we then
have $i + j + \ell \geq n$. If $i = 1$, this implies that $j \geq n - \ell - 1 =
{\mu}$, whence $x \wedge y \in \mathcal O/\mathfrak p^{m-1} \wedge \mathfrak
p^{{\mu}-1}/\mathfrak p^{m-1}$. This is the base for the induction.
Suppose now that $i > 1$.  Then $x = \kappa \tilde x$ for some $\tilde x \in
\mathcal O/\mathfrak p^{m-1}$ corresponding to a group element in $P_{i-1}
\backslash P_{i}$. Observe that
\begin{equation*}
\label{eq:trick}
\tilde x \wedge \kappa y + x \wedge y + x \wedge \kappa y = \kappa(\tilde x \wedge y) \in [\mathcal O/\mathfrak p^{m-1} \wedge \mathcal O/\mathfrak p^{m-1}, C_p].
\end{equation*}
Note that $\kappa y$ corresponds to a group element in $P_{j+1}$, and therefore
$\tilde x \wedge \kappa y$ and $x \wedge \kappa y$ both belong to $\K \alpha$.
Using this reasoning, we show our claim by reverse induction on $j$. When $j
\geq {\mu}$, it is clear that $x \wedge y \in \mathcal O/\mathfrak p^{m-1}
\wedge \mathfrak p^{{\mu}-1}/\mathfrak p^{m-1}$. Assume now that $j < {\mu}$. By
induction, both $\tilde x \wedge \kappa y$ (since $\tilde x$ belong to a higher
level) and $x \wedge \kappa y$ (since $\kappa y$ belongs to a lower level) are
contained in $\mathcal O/\mathfrak p^{m-1} \wedge \mathfrak
p^{{\mu}-1}/\mathfrak p^{m-1}$ modulo $[\mathcal O/\mathfrak p^{m-1} \wedge
\mathcal O/\mathfrak p^{m-1}, C_p]$. An application of \eqref{eq:trick} then
implies that the same holds for $x \wedge y$, as claimed.
\end{proof}

The above gives that
\[
\B_0(G)
=
\frac{(\mathcal O/\mathfrak p^{m-1} \wedge \mathcal O/\mathfrak p^{m-1})_{C_p}}{\K \alpha_{C_P}}
=
(\mathcal O /\mathfrak p^{{\mu} -1} \wedge \mathcal O /\mathfrak p^{{\mu} -1})_{C_p}.
\]

Finally, a structure description of the group $(\mathcal O/\mathfrak p^{{\mu}-1}
\wedge \mathcal O/\mathfrak p^{{\mu}-1})_{C_p}$ may be read off from the explicit
$C_p$-module decomposition of $\mathcal O/\mathfrak p^{{\mu}-1} \wedge \mathcal
O/\mathfrak p^{{\mu}-1}$ into a direct sum of cyclic submodules as given in
\cite[Theorem 8.13]{Lee78b}.
Then Theorem \ref{th:MaxClassConstructionLarge} readily follows.

Let us consider some special cases of Theorem \ref{th:MaxClassConstructionLarge}.
When $n$ is chosen so that $n \equiv m-1
\pmod{p-1}$, we obtain a group $G$ with $\B_0(G)$ homocyclic of rank $(p-1)/2$
and exponent $p^{(n-m+1)/(p-1)}$. Further selecting $n \approx 2m$, we have the
property $\exp \B_0(G) \approx \sqrt{\exp G}$. Consider now the option $n =
m+1$. In this case, we obtain groups that are immediate descendants of groups on
the main line of the maximal class tree. Their Bogomolov multipliers are $C_p$. In the
very special case when $m = 4$, we obtain the known groups of order $p^5$ with nontrivial
Bogomolov multipliers.  Another extreme option is picking $n =
2m - 2$. In this case, we have $n - m + 1 = m - 1$, so by varying $m$, the
groups exhaust all the possibilities for the Bogomolov multiplier, depending on
the value $m-1 \pmod{p-1}$. Finally, consider the option of selecting
consecutive values $n = m+1, m+2, \dots, 2m - 2$. In terms of the constructed
groups, this corresponds to a path in the maximal class tree, starting from an
immediate descendant of a group on the main line (which is of order $p^m$) and
going deeper into the branch. In this process, the value $n - m + 1$ grows one
by one, so that the corresponding Bogomolov multipliers grow in size by $p$ on
each second step, starting with $C_p$ for the group closest to the main line.
The growth is ``staircase''-like, consecutively increasing the orders of the
generators by a factor of $p$ on each second step.

The above value is in fact the largest possible for the rank of the Bogomolov multiplier of any $p$-group of maximal class, as the following result shows.

\begin{corollary}
\label{c:rank-bound-max}
Let $G$ be a $p$-group of maximal class. 
Then $\rank \B_0(G) \leq (p-1)/2$.
\end{corollary}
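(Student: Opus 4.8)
The plan is to exhibit an efficient two-generator presentation of $G$ and feed the relevant counts directly into Lemma \ref{l:rank-bound}. Since $G$ is a $p$-group of maximal class of order $p^n$, we have $G/\gamma_2(G)\cong C_p\times C_p$ and hence $\dd(G)=2$. I would take exactly the presentation recalled at the start of the proof of Theorem \ref{th:UCR:MaxClassP1}, coming from \cite[Exercise 3.3(4)]{Lee02}, realizing $G$ as $F/R$ with $F$ free on $t,t_1$ and $R$ the normal closure of $\mathcal R_0\cup\mathcal R_1\cup\mathcal R_2$.

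The crux is the relation count. One has $|\mathcal R_0|=1$, $|\mathcal R_1|=2$ and $|\mathcal R_2|=(p-1)/2$, so this presentation uses
\[
\rr(G)=1+2+\frac{p-1}{2}=\frac{p+5}{2}
\]
defining relations. Among these, the single relator $t_n\in\mathcal R_0$ is a commutator: the word $t_n=[t_{n-1},t]$ represents $s_n=1$ in $G$, so its normal form is trivial, giving $\llbracket t_n\rrbracket=1$ and hence $\rho(t_n)=t_n=[t_{n-1},t]$. Therefore $\rr_{\K}(G)\ge 1$. Feeding $\dd(G)=2$, $\rr(G)=(p+5)/2$ and $\rr_{\K}(G)\ge 1$ into Lemma \ref{l:rank-bound} yields
\[
\rank\B_0(G)\le \rr(G)-\rr_{\K}(G)-\dd(G)\le \frac{p+5}{2}-1-2=\frac{p-1}{2},
\]
which is precisely the asserted bound. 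Since we only need a lower bound on $\rr_{\K}(G)$, it is harmless if, for particular groups, some power relators in $\mathcal R_1$ or relators in $\mathcal R_2$ happen to be commutators as well; this can only sharpen the estimate.

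The main obstacle I anticipate is justifying that this presentation, in the stated form with exactly $(p-1)/2$ relators encoding the commutator structure and a single commutator relator $t_n$, is available for \emph{every} $p$-group of maximal class of order $p^n$ — including those of degree of commutativity zero and with non-metabelian $P_1$ — and not merely for the groups covered by Theorem \ref{th:UCR:MaxClassP1}. The metabelian and positive-degree hypotheses there are invoked only in the remainder of that proof and not in setting up the presentation itself, so the presentation should carry over, but this must be confirmed against \cite[Exercise 3.3(4)]{Lee02}. A secondary point to verify is that the hypotheses of Lemma \ref{l:rank-bound} are genuinely met by this presentation on the minimal generating set $\{s,s_1\}$, so that the displayed inequality applies as stated.
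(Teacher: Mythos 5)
Your proposal is correct and follows exactly the paper's (one-line) proof: apply Lemma \ref{l:rank-bound} to the presentation from \cite[Exercise 3.3(4)]{Lee02}, counting $\dd(G)=2$, $\rr(G)=(p+5)/2$, and at least one commutator relator (namely $t_n=[t_{n-1},t]$). Your worry about the presentation's availability is reasonable but unproblematic: the cited exercise provides such a presentation for every $p$-group of maximal class, independently of the metabelian and positive-degree-of-commutativity hypotheses used elsewhere.
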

\begin{proof}
In analogy to the Schur multiplier, given a presentation of any finite group
$G$ consisting of $d$ generators and $r$ relations among which $r_k$ are commutators,
the rank of $\B_0(G)$ is bounded by $r - r_k - d$ (see \cite[Corollary 5.2]{JM16}).
Now, we recalled at the beginning of the proof of Theorem \ref{th:UCR:MaxClassP1}
that any $p$-group of maximal class admits a presentation with $d = 2$,
$r = (p+5)/2$ and $r_k = 1$, and this proves the result.
\end{proof}

The above exponents are also the largest possible for groups in which $P_1$ is of nilpotency class $2$. This is so because if $[P_1, P_1] = P_m$, then $\B_0(G)$ is a quotient of the
group $(\mathcal O/\mathfrak p^{m-1} \wedge \mathcal O/\mathfrak p^{m-1})_{C_p}$,
whose exponent is
$p^{\lceil (n - m + 1)/(p-1) \rceil} \leq p^{\lceil n/2(p-1) \rceil} \approx \sqrt{\exp G}$.

\section{Higher coclass}
\label{s:coclass}

In this final section, we investigate Bogomolov and Schur multipliers of finite
$p$-groups of a fixed coclass $r$. Referring to \cite{Lee02,EF11}
we recall a few fundamental facts about coclass theory. Groups of coclass
$r$ can be collected into a graph $\Gamma(p,r)$ whose vertices correspond
to their isomorphism types and there is an edge between $G$ and $H$ if and only
if $G \cong H/N$ where $N$ is the last term of the lower central series of $H$.
An infinite pro-$p$ group $S$ of coclass $r$ corresponds precisely to 
an infinite path in the above graph, and there are only finitely many of them.
One can find a minimal positive integer $i$, the so-called primary root (see \cite[Section 2.1]{EF11}), with the property that the coclass of
$S/\gamma_i(S)$ is precisely $r$ and that the quotient $S/\gamma_i(S)$ is not
isomorphic to any $\bar S / \gamma_i(\bar S)$  for any other pro-$p$ group $\bar S$ 
of coclass $r$. We denote by 
$\mathcal{T}(S)$ the subtree of $\Gamma(p,r)$ rooted at $S/\gamma_i(S)$.
The tree $\mathcal{T}(S)$ has a main line given by the lower central quotients
of $S$ and it is in general of unbounded width. 

As shown by the examples in the
previous section, it is possible to find groups with Bogomolov multipliers of
arbitrary exponents as one moves further away from the main line. Via the theory
of CP covers \cite{JM16}, one can modify these examples to produce groups of arbitrary 
coclass with the same property. 
These groups further possess the property that they are stem, i.e.,
their center is contained in their derived subgroup.

\begin{theorem}
Let $p \geq 5$ be a prime and $r$ a positive integer. 
For every $x \geq r \geq 1$ there exist stem $p$-groups of coclass $r$
with Bogomolov multiplier of exponent $p^x$.
\end{theorem}
\begin{proof}
Let $H$ be a $p$-group of maximal class with the
properties that $\exp \B_0(H) = p^x$ and that there exists $L \leq \B_0(H)$ 
with $\exp L = p^x$ and $|\B_0(H):L| = p^{r-1}$.
Such a group $H$ exists by Theorem \ref{th:MaxClassConstructionLarge}.
Let $G$ be a CP covering group (see \cite[Section 4]{JM16}) of $H$,
and let $Q = G / L$.
By \cite[Lemma 4.6]{JM16},
the group $G$ is a CP covering group of $Q$ with kernel $L$,
and we have $\B_0(Q) \cong L$.
Therefore $|Q| = |G| / |L| = |H| \cdot |\B_0(H)| / |L| = |H| \cdot p^{r-1}$.
At the same time, 
it follows from \cite[Lemma 4.8]{JM16} that $Q/Z(Q) \cong H/Z(H)$,
so $Q$ has the same nilpotency class as $H$. Therefore $Q$ is of coclass $r$. 
Now, as $H$ is a group of maximal class, its center is equal to the last
term of its lower central series, so $H$ is a stem group. The center
$Z(Q)$ is generated by $\B_0(H)/L$ together with the preimage of $Z(H)$
under the projection $Q \to H$.
Since we also have $\B_0(H)/L \leq [Q,Q]$,
it follows that $Q$ is a stem group.
\end{proof}

In the remainder, we will focus on bounding the sizes of Bogomolov multipliers
of groups of fixed coclass. First of all, we recall that the rank of
Schur multipliers (and therefore also Bogomolov multipliers) can be absolutely bounded.

\begin{theorem}[{\cite[Theorem 1]{AE13}}]
There is a constant $C = C(p,r)$ such that
for every finite $p$-group $G$ of coclass $r$, we have
$\dd(\B_0(G)) \leq \dd(\M(G)) \leq C$.
\end{theorem} 

Bounding the exponent of both multipliers is a more delicate issue. 
If we only consider finite $p$-groups of fixed coclass that are quotients
of an infinite pro-$p$ group, then the exponents of the Schur multipliers are
in general unbounded (see \cite[Theorem A]{Eic08}).
The exponent can nevertheless be bounded in terms of the exponent of the
underlying group. This is in accordance with a classical problem asking what the
relationship is between the exponent of a finite group and of its Schur
multiplier. More precisely, it is conjectured that $\exp \M(G)$ divides $\exp G$
for a finite $p$-group $G$ if $p > 2$. Confer \cite{Sam17} and the references therein.
In the context of $p$-groups of fixed coclass, it is known that one at least has  
a linear bound on the exponent of the Schur multiplier.

\begin{theorem}[{\cite[Theorem 3.3]{Sam17}}] \label{th:schur_exp}
There is a constant $D = D(p,r)$ such that
for every finite $p$-group $G$ of coclass $r$, we have
$\exp \M(G) \leq D \cdot \exp G$.
\end{theorem}

For any group of maximal class, the exponent of the Schur multiplier is bounded
by the exponent of the group (see \cite[Theorem 1.4]{Mor11}).

As for the Bogomolov multiplier, we now show that there is an
absolute bound for the exponent of Bogomolov multipliers of quotients of
infinite pro-$p$ groups that are on the main line of the coclass tree.

\begin{theorem} \label{th:bog_exp_main_line}
There is a constant $E = E(p,r)$ such that
for every infinite pro-$p$-group $S$ of coclass $r$, we have
$\exp \B_0(S/\gamma_i(S)) \leq E$ for all $i \geq 1$.
\end{theorem} 
\begin{proof}
By \cite[Theorem 10.1]{DDMS91}, we have that the group $S$
contains an abelian subgroup $A$ of index bounded in terms of $p$
and $r$. 
It now follows from \cite[Proposition 6.2]{JM16} that
$\exp \B_0(S/\gamma_i(S)) \leq |S:A\gamma_i(S)| \cdot \exp \B_0(A \gamma_i(S)/\gamma_i(S)) = |S:A\gamma_i(S)| \leq |S:A|$.
\end{proof}

The groups in Theorem \ref{th:MaxClassConstructionLarge} show how 
groups far away from the main line of the coclass tree $\Gamma(p,1)$ can have Bogomolov
multipliers whose exponents grow without limits.  More can be said about the
asymptotic structure of both multipliers of groups that are boundedly away from
the main line in $\mathcal{T}(S)$ for any pro-$p$ group $S$ of coclass $r$. More precisely, one can consider the shaved
subtree $\mathcal{T}(S,k)$ consisting of those groups in $\mathcal{T}(S)$ that
are of distance at most $k$ from the main line. Its branches
$\mathcal{B}_j(S,k)$ consist of descendants of $S/\gamma_j(S)$ that are not
descendants of $S/\gamma_{j+1}(S)$. It is proved in \cite{EL08} that these
branches satisfy a periodic pattern. This periodicity can be made explicit on
the level of groups; each group in the first branch determines an infinite
coclass sequence $(G_x)_{x \in \mathbb{N}}$. 
For each infinite sequence, 
the groups are uniformly described by a single parameterised presentation
(see \cite{EF11} as well as \cite[Chapter 8]{Dor10} for more details),
and so their Schur multipliers relate to the parameter $x$ and can also be 
uniformly described. 
It is known that sizes of these Schur multipliers will grow
as the orders of the underlying groups grow (see \cite[Theorem A]{Eic08}).
Based on the above given bounds for the exponent, the uniform
description of Schur multipliers from \cite{EF11} can be refined and extended
to the Bogomolov multiplier as follows.

\begin{theorem}
Let $(G_x)_{x \in \mathbb{N}}$ be an infinite coclass sequence.
Then there exist a finite abelian $p$-group $A$ and
integers $s_1, \dots, s_m$ such that
\[
\M(G_x) \cong A \times \prod_{i=1}^m C_{p^{x + s_i}}
\]
for almost all $x$, and for $p > 2$ one has $m > 0$.
Moreover, the groups $\B_0(G_x)$ are almost all pairwise isomorphic.
\end{theorem}
\begin{proof}
By \cite[Theorem 1]{EF11} there exist integers $r_1, \dots, r_m$ and 
$s_1, \dots, s_m$ so that the Schur multiplier $\M(G_x)$ is of the form
\begin{equation} \label{eq:schur_multiplier_form_in_family}
\prod_{i=1}^m C_{p^{r_i x + s_i}}
\end{equation}
for almost all $x$. 
At the same time, it follows from \cite[Section 9]{EL08} 
(see also \cite[Remark 5.23]{Dor10}) that 
each group $G_x$ fits into an exact sequence
\begin{equation} \label{eq:ses_gx}
1 \longrightarrow
T_x \longrightarrow
G_x \longrightarrow
R \longrightarrow
1, 
\end{equation}
where $R$ is a fixed finite $p$-group and $T_x$ is a homocyclic abelian group (in fact, an $R$-module) of
exponent $p^{x + e}$ for some fixed $e$. 
This gives that $\exp G_x \leq \exp R \cdot \exp T_x = \exp R \cdot p^{x + e}$.
Now, it follows from Theorem \ref{th:schur_exp} that
$\exp \M(G_x)$ is linearly bounded in terms of $\exp G_x$,
which implies that all $r_1, \dots, r_m$ must be either $0$ or $1$.
Together with \cite[Theorem A]{Eic08}, this proves the first claim.

We now turn to Bogomolov multipliers.
Suppose $R$ has a polycyclic presentation with $d$ generators $g_1, \dots, g_d$,
and let $T = \mathbb{Z}_p^n$, so that $T_x = T / p^{x+e} T$.
The group $G_x$ has a central extension $G_x^*$,
constructed by adding a new central generator $y_{i,j}$ for $1 \leq j \leq i \leq d + n$ 
as a tail of every one relation in the polycyclic presentation for $G_x$
(see \cite[Section 8.1]{Dor10}).
Suppose there are $\ell$ such relations.
Setting $Y_x^* = \langle y_{i,j} \mid 1 \leq j \leq i \leq d + n \rangle$,
one has $\M(G_x) \cong \mathrm{Torsion}(Y_x^*)$. 
The presentation of $G_x^*$ might not be consistent (see \cite[Section 8.3]{Dor10}),
so one needs to impose relations between the central generators $y_{i,j}$
arising from consistency relations. These relations are all of the form
\begin{equation} \label{eq:relations_tails_consistency}
\prod_{1 \leq j \leq i \leq d + n} y_{i,j}^{\theta_{i,j}(x)} = 1
\end{equation}
for some $\theta_{i,j}(x) \in \mathbb{Z}_p[p^x]$, and can therefore
be collected into a consistency matrix $A(x)$ with coefficients in 
$\mathbb{Z}_p[p^x]$ (see \cite[Definition 8.11]{Dor10}).
After imposing the consistency relations, one sees that
$\mathrm{Torsion}(Y_x^*)$ is of the form \eqref{eq:schur_multiplier_form_in_family}
(see \cite[Theorem 8.14]{Dor10}).
Now, it follows from \cite[Proposition 2.1]{JM14} that in order to obtain
$\B_0(G_x)$, we need to add extra relations to the consistency matrix $A(x)$
in order to obtain a matrix $B(x)$.
These relations arise in the following way. For any pair of commuting
elements $z_1, z_2$ in $G_x$, we pick their lifts $z_1^*, z_2^*$ 
in the group $G_x^*$ and add the relation $[z_1^*, z_2^*] \in Y_x^*$
to the matrix $B(x)$. If $Z_x^*$ is the group obtained by factoring $\mathbb{Z}_p^\ell$
by the relations imposed by $B(x)$, then we have $\B_0(G_x) \cong \mathrm{Torsion}(Z_x^*)$. By applying to the same reasoning as in the case of the Schur multiplier, it therefore
suffices to show that the relations arising from commuting
pairs in $G_x$ are of the form \eqref{eq:relations_tails_consistency}. This can be seen as follows. Fix sections of the projections $G_x \to R$ and $T \to T_x$.
Every commuting pair in $G_x$ is of the form $w_1 t_1$, $w_2 t_2$
for $w_1, w_2$ a lift of a commuting pair in $R$ and $t_1, t_2$ elements of
$T_x$. 

{\em Suppose first that $w_1 = w_2 = 1$.} This means that we are dealing with
commuting pairs in $T_x$. For each pair of generators of $T_x$, we evaluate
their commutator in $G_x^*$ and obtain $[t_1, t_2] \in Y_x^*$, which is simply
one of the generators $y_{i,j}$ and therefore of the form \eqref{eq:relations_tails_consistency}. 

{\em Next, consider the case when $w_2 = 1$.}
This means we are dealing with commuting pairs of the form $w_1 t_1, t_2$.
Since we already imposed the relation that $t_1, t_2$ commute in $Z_x^*$,
it suffices to deal with the case when the commuting pair is of the form
$w_1, t_2$. When $w_1$ is fixed, this amounts to specifying the kernel
of the associated $\mathbb{Z}_p$-linear map $T_x \to T_x$, $t \mapsto (1 - w_1) \cdot t$,
where $w_1 \in R$ acts on $T_x$ via \eqref{eq:ses_gx}. This action is in fact
descended from a fixed action of $R$ on $T$ (see \cite[Remark 5.23]{Dor10}).
Consider then the linear map $1 - w_1 \colon T \to T$ and put it into its 
Smith normal form $XDY$ with $D = \mathrm{diag}(p^{a_1}, \dots, p^{a_k}, 0, \dots, 0)$ for some non-negative integers $a_1 \geq \cdots \geq a_k$.
Set $v_i = Y^{-1} e_i$ for $1 \leq i \leq n$.
Then the kernel of the induced map $1 - w_1$ on $T_x$ is generated
by the vectors
\[
p^{x+e-a_1} v_1, \dots, p^{x+e-a_k} v_k, v_{k+1}, \dots, v_{n}.
\]
For each one of these generating vectors $v$, we evaluate its commutator
with $w_1$ in $G_x^*$ and obtain $[w_1, v] \in Y_x^*$, which is
(by \cite[Lemma 8.8]{Dor10}) again of the form \eqref{eq:relations_tails_consistency}.
An analogous argument works when we assume that $w_1 = 1$.

{\em Finally, consider the general case.} 
Let $w_1, w_2$ be a commuting pair in $R$.
In order to determine commuting pairs $w_1 t_1, w_2 t_2$ in $G_x$,
rewrite the commuting condition as
$w_1 t_1 w_2 t_2 = w_2 t_2 w_1 t_1$, which is equivalent to
$t_1^{w_2} t_2 = [w_2, w_1] t_2^{w_1} t_1$.
Writing $[w_2, w_1] = t^{\alpha(x)}$ for some $t \in T_x$ and
$\alpha(x) \in \mathbb{Z}_p[p^x]$
and interpreting the action of $R$ on $T_x$ as a linear map,
we can rewrite the commuting condition as
\begin{equation} \label{eq:linear_equation_ad_operators}
(1 - w_2) \cdot t_1
-
(1 - w_1) \cdot t_2
= t^{\alpha(x)}.
\end{equation}
This is a linear equation in $t_1, t_2$. As in the previous case,
one can see this as a linear transformation on $T$ and pass 
to a Smith normal form. We thus get a diagonal equation of the form
$D [\tilde t_1, \tilde t_2]^T = [\tilde t^{\tilde \alpha(x)}]$
for $\tilde t_1, \tilde t_2, \tilde t \in T$ and 
$\tilde \alpha(x) \in \mathbb{Z}_p[p^x]$.
The solutions of this equation, if they exist
(depending on the initial pair $w_1, w_2$), can be
obtained as sums of a particular solution $p_1, p_2$ and an element $k_1, k_2$ of the 
kernel, where both the particular solution and generators of the kernel are of the form 
$s^{\beta(x)}$ for some $s \in T$, $\beta(x) \in \mathbb{Z}_p[p^x]$ 
by the same argument as in the previous case.
Let us add to the matrix $B(x)$ the relations arising from
evaluating in $G_x^*$ the commutators 
$[w_1 p_1 b_1, w_2 p_2 b_2], [w_1 p_1, w_2 p_2] \in Y_x^*$
for $b_1, b_2$ basis vectors of the kernel corresponding to the linear equation 
\eqref{eq:linear_equation_ad_operators}.
By imposing these relations, we can verify that in the group $G_x^*$,
all commuting pairs of $G_x$ lift to commuting pairs. Indeed, for
$w_1, w_2 \in G_x$, particular solutions $p_1, p_2 \in T_x$,
and elements $k_1, k_2 \in T_x$ in the kernel, we can expand the commutator 
$[w_1 p_1 k_1, w_2 p_2 k_2]$ in $G_x^*$ modulo the relations collected in $B(x)$ into
\[
[w_1 p_1, w_2 p_2 k_2]^{k_1} [k_1, w_2 p_2 k_2] =
[w_1 p_1, k_2] [w_1 p_1, w_2 p_2] [k_1, w_2 p_2].
\]
Since $p_1, p_2$ is a particular solution, this
further simplifies to $[w_1, k_2] [k_1, w_2 ]$. 
The latter is linear in $k_1, k_2$, and these can be expanded by the basis of the 
kernel of \eqref{eq:linear_equation_ad_operators}.
As all of the commuting relations $[w_1 p_1 b_1, w_2 p_2 b_2]$
have been added to $B(x)$, it follows that $[w_1, k_2] [k_1, w_2 ] = 1$. 
We have therefore collected
all relations in $Y_x^*$ induced by commuting pairs in the matrix $B(x)$,
and they are all of the form \eqref{eq:relations_tails_consistency}, as required.

It follows that $\B_0(G_x)$ is of the form \eqref{eq:schur_multiplier_form_in_family}
for large enough values of $x$.
Now, recall that $G_x$ is boundedly far in the graph $\mathcal{T}(S)$ 
from a group on the mainline. This means that there is a surjection
$\pi \colon G_x \to S_j$ for a group $S_j = S / \gamma_j(S)$ on the main line
with $\ker\pi$ generated by commutators and $|\ker \pi| \leq p^C$ for some constant $C$.
Let $z \in \ker \pi \cap Z(G_x)$ be a central commutator of order $p$.
It follows from \cite[Proposition 4.1]{Mor11} 
(see also the proof of \cite[Theorem 7.6]{JM16}) that
\[
\ker \left( \B_0(G_x) \to \B_0(G_x/\langle z \rangle) \right) 
\leq
J_z = \langle x \curlywedge y \in G_x \curlywedge G_x \mid [x,y] = z \rangle.
\]
Note that $\exp J_z = p$, and therefore 
$\exp \B_0(G_x) \leq p \cdot \exp \B_0(G_x/\langle z \rangle)$. 
Repeating this argument with $G_x$ replaced by $G_x/\langle z \rangle$,
it follows that
\[
\exp \B_0(G_x) \leq |\ker \pi| \cdot \exp \B_0(S_j) \leq p^C \cdot \exp \B_0(S_j).
\]
Groups on the main line have bounded
Bogomolov multipliers by Theorem \ref{th:bog_exp_main_line},
which means that $\exp \B_0(G_x)$ is also bounded. It now follows
from the form \eqref{eq:schur_multiplier_form_in_family}
that $\B_0(G_x)$ eventually stabilizes for large enough $x$.
This completes the proof.
\end{proof}

We provide a non-trivial example obtained using {\sf GAP} \cite{DEF08}.

\begin{example}

Consider the following pro-$3$ group of coclass $3$. Let $P$ be the semidirect
product of $C_9 = \langle x \rangle$ by $C_3 = \langle y \rangle$ with $y$
acting nontrivially on $x$. Let $T = \mathbb{Z}_3^6 = \langle t_1, \dots, t_6
\rangle$ be a free $3$-adic abelian group. Let the generators $x,y$ act on $T$
via the matrices
\[
X = 
\begin{bmatrix}
0 &  2 &  1 &  -1 &  2 &  -2 \\
0 &  1 &  0 &  0 &  1 &  -1 \\
0 &  0 &  0 &  0 &  1 &  -1 \\
0 &  0 &  0 &  0 &  3 &  -2 \\
1 &  1 &  0 &  -1 &  0 &  -1 \\
0 &  3 &  0 &  -1 &  0 &  -1 \\
\end{bmatrix}
\quad
\textrm{and}
\quad
Y = 
\begin{bmatrix}
1 & 0 & 2 & -2 & -2 & 0 \\
0 & 1 & 1 & -1 & -1 & 0 \\
0 & 0 & 1 & -1 & 0 & 0 \\
0 & 0 & 3 & -2 & 0 & 0 \\
0 & 0 & 0 & 0 & -2 & 1 \\
0 & 0 & 0 & 0 & -3 & 1 \\
\end{bmatrix}
\]
The group $S = P \ltimes T$ is a pro-$3$ group of coclass $3$.
It has a finite quotient $G = S / \gamma_5(S)$ of order $3^7$, which is an extension of
$\gamma_3(S)/\gamma_5(S) \cong C_3 \times C_3$ by $P \ltimes T/\gamma_3(S)$
with $T/\gamma_3(S) \cong C_3 \times C_3$. The Bogomolov multiplier of $G$
is nontrivial, in fact $\B_0(G) \cong C_3$.
Hence all the finite quotients $S / \gamma_i(S)$ for $i \geq 5$ have non-trivial
Bogomolov multipliers.
\end{example}


\end{document}